\newtheorem{theorem}{Theorem}[section]
\newtheorem{proposition}[theorem]{Proposition}
\newtheorem{lemma}[theorem]{Lemma}
\newtheorem{corollary}[theorem]{Corollary}
\theoremstyle{definition}
\newtheorem{definition} [theorem]{Definition}
\theoremstyle{remark} \newtheorem{remark} {Remark}
\numberwithin{equation}{section}
\begin{document}

\title[The ridgelet transform and quasiasymptotic behavior of distributions]{The ridgelet transform and quasiasymptotic behavior of distributions}

\author[S. Kostadinova]{Sanja Kostadinova}
\address{Faculty of Electrical Engineering and Information Technologies, Ss. Cyril and Methodius University, Rugjer Boshkovik bb, 1000 Skopje, Macedonia}
\email{ksanja@feit.ukim.edu.mk}

\author[S. Pilipovi\'{c}]{Stevan Pilipovi\'{c}}
\address{Department of Mathematics, University of Novi Sad, Trg Dositeja Obradovi\'ca 4, Novi Sad, Serbia}
\email {stevan.pilipovic@dmi.uns.ac.rs}
\thanks{This work was supported by the Serbian Ministry of Education, Science and Technological Development, through the project \# 174024.}

\author[K. Saneva]{Katerina Saneva}
\address{Faculty of Electrical
Engineering and Information Technologies, Ss. Cyril
and Methodius University, Karpos 2 bb, 1000 Skopje, Macedonia}
\email{saneva@feit.ukim.edu.mk}

\author[J. Vindas]{Jasson Vindas}
\thanks{J. Vindas gratefully acknowledges support by Ghent University, through the BOF-grant 01N01014.}
\address{Ghent University, Krijgslaan 281 Gebouw S22, B-9000 Gent, Belgium}
\email{jvindas@cage.UGent.be}

\subjclass[2010]{Primary 42C20, 41A27, 46F12. Secondary 40E05, 44A15, 46F10}
\keywords{ridgelet transform; quasiasymptotic behavior; asymptotic behavior of generalized functions; distributions; Tauberian theorems}

\begin{abstract}
We characterize the quasiasymptotic behavior of distributions in terms of a Tauberian theorem for ridgelet transforms.
\end{abstract}

\maketitle

\section{Introduction}
Ridgelet analysis may be considered as an adaptation of wavelet analysis for dealing with higher dimensional phenomena \cite{candes3}. The theory of the continuous ridgelet transform for functions was developed by Cand\`{e}s in \cite{candes1,candes2}.  This transform is the composition of the Radon transform with a one-dimensional continuous wavelet transform. Thus, ridgelet analysis can be seen as a form of wavelet analysis performed in the Radon domain. In \cite{KPSV}, the authors have extended the theory to include ridgelet transforms of Lizorkin distributions, that is, elements of $\mathcal{S}' _{0}(\mathbb{R}^{n})$, the dual of the space of highly time-frequency localized test functions $\mathcal{S} _{0}(\mathbb{R}^{n})$ \cite{hols}.

The purpose of this paper is to study the quasiasymptotic behavior of Lizorkin distributions via ridgelet analysis. The quasiasymptotic behavior was introduced by Zav'yalov in the context of  quantum field theory and it was further studied by him, Vladimirov and Drozhzhinov in connection with Tauberian theorems for multidimensional Laplace transforms \cite{VDZ}. This concept measures the scaling asymptotic properties of distributions through asymptotic comparison with Karamata regularly varying functions. The main result of this article (Theorem \ref{tauberRT}) is a characterization of the quasiasymptotic behavior in terms of a Tauberian theorem for the ridgelet transform. We point out that there is an extensive literature in Abelian and Tauberian theorems for generalized functions; see, e.g., the monographs \cite{ML,PST,PSV,VDZ} and references therein for the analysis of various integral transforms. For studies involving the quasiasymptotic behavior and wavelet analysis we refer to \cite{DZ1,DZ2,KV,PTT,PT,prof44,saneva1,saneva2,SV,vindas3,Walter2}.

Most of our arguments in this article rely on the intrinsic connection between the ridgelet, Radon, and wavelet transforms. For distributions, such a connection must be carefully handled and involves ideas from the theory of tensor products of topological vector spaces. Section \ref{preli} collects background material from \cite{KPSV} on these three integral transforms. In Section \ref{section bounded sets}, we present a ridgelet transform characterization of the bounded subsets of $\mathcal{S}'_{0}(\mathbb{R}^{n})$; we also show in this section that the Radon transform on $\mathcal{S}'_{0}(\mathbb{R}^{n})$ is a topological isomorphism into its range. It is interesting to notice that the Radon transform may fail to have the latter property even on spaces of test functions; for instance, Hertle has shown \cite{hertle2} that the Radon transform on $\mathcal{D}(\mathbb{R}^{n})$ is not an isomorphism of topological vector spaces into its range. Finally, Section \ref{section abel-tauber} deals with Abelian and Tauberian theorems for the ridgelet transform. Theorem \ref{tauberRT} should be compared with the Taberian theorems for wavelet transforms from \cite{DZ2,prof44,vindas3}.

\section{Preliminaries}\label{preli}
\subsection{Spaces}\label{spaces}
We denote as $\mathbb H=\mathbb R\times \mathbb R_{+}$ the upper-half plane and ${\mathbb Y^{n+1}}=\mathbb{S}^{n-1}\times\mathbb{H}=\{(\mathbf{u},b,a):\: \mathbf{u} \in {\mathbb S^{n-1}}, b
\in {{\mathbb R}}, a\in{\mathbb R}_{+} \}$,
where ${\mathbb S^{n-1}}$ stands for the unit sphere of $\mathbb R^{n}$. We always assume that the dimension $n\geq2$.

We provide all distribution spaces with the strong dual topologies. The Schwartz spaces ${\mathcal D}({{\mathbb R}}^n)$, ${\mathcal S}({{\mathbb R}}^n)$, ${\mathcal D}'({{\mathbb
R}}^n)$, ${\mathcal S}'({{\mathbb R}}^{n})$, and $\mathcal{D}'_{ L^{1}}(\mathbb{R}^{n})$ are well known \cite{schwartz}. We will also work with the Lizorkin test function space ${{\mathcal S}}_0({{\mathbb
R}}^n)$ of highly time-frequency localized functions over ${{\mathbb
R}}^n$ \cite{hols}. It consists of those elements of ${\mathcal S}({{\mathbb R}}^n)$ having
all moments equal to $0$, namely, $\phi\in {{\mathcal S}}_0({\mathbb{R}}^{n})$ if $
\int_{{{\mathbb R}}^n}{\mathbf x^m\phi(\mathbf x)d\mathbf x}=0, $ for all $m\in {{\mathbb N}}_0^n.$ It is a closed subspace of  ${\mathcal S}({{\mathbb R}}^n)$.
Its dual space $\mathcal{S}'_{0}(\mathbb{R}^{n})$, known as the space of Lizorkin distributions, is canonically isomorphic to the quotient of $\mathcal{S}'(\mathbb{R}^{n})$ by the space of polynomials.
We denote by $\mathcal{D}(\mathbb{S}^{n-1})$ the space of smooth functions on the sphere. Given a locally convex space $\mathcal{A}$ of smooth test functions on $\mathbb{R}$, we write $\mathcal{A}(\mathbb{S}^{n-1}\times\mathbb{R})$ for the space of functions $\varrho(\mathbf{u},p)$ having the properties of $\mathcal{A}$ in the variable $p\in\mathbb{R}$ and being smooth in $\mathbf{u}\in\mathbb{S}^{n-1}$.

We introduce ${\mathcal S}({\mathbb Y^{n+1}})$ as the space of functions ${\Phi }\in C^{\infty
}({\mathbb Y^{n+1}})$ satisfying the decay conditions
\begin{equation} \label{eqNorms}
\rho_{s,r}^{l,m,k}(\Phi)={\mathop{\sup }_{(\mathbf{u},b,a)\in {\mathbb Y^{n+1}}}
\left(a+\frac{1}{a}\right)^{s}{{{\rm (1+}|b|{\rm )}}^{r}}
\left|\frac{{\partial
}^l}{\partial a^l}\frac{{\partial }^m}{\partial
b^m}{\triangle_{\mathbf{u}}^{k}}{ \Phi
}\left(\mathbf{u},b,a \right)\right|{\rm \ }<\infty {\rm \ }\ }
\end{equation}
for all $l,m,k,s,r \in {{\mathbb N}}_0$, where ${\triangle_{\mathbf{u}}}$ is the Laplace-Beltrami operator on the unit sphere $\mathbb S^{n-1}$. The functions from ${\mathcal S}({\mathbb Y^{n+1}})$ thus have fast decay for large and small values of the scale variable $a$. The topology of this space is defined by means of the seminorms (\ref{eqNorms}). Its dual is denoted by ${\mathcal S}'({\mathbb Y^{n+1}})$.

A related space is $\mathcal{S}(\mathbb{H})$, the space of highly localized test functions on the upper half-plane \cite{hols}. Its elements are smooth functions $\Psi$ on $\mathbb{H}$ that satisfy
\[\mathop{\sup }_{(b,a)\in {\mathbb H}}\left(a+\frac{1}{a}\right)^{s}
(1+|b|)^{r}
\left|\frac{{\partial
}^l}{\partial a^l}\frac{{\partial }^m}{\partial
b^m}{ \Psi
}\left(b,a \right)\right|{\rm \ }<\infty,
\]
for all $l,m,s,r \in {{\mathbb N}}_0$; its topology being defined in the canonical way \cite{hols}.

Observe that the nuclearity of the Schwartz spaces \cite{treves} immediately yields the equalities $\mathcal{S}(\mathbb{Y}^{n+1})=\mathcal{D}(\mathbb{S}^{n-1})\hat{\otimes}\mathcal{S}(\mathbb{H})$ and $\mathcal{S}_{0}(\mathbb{S}^{n-1}\times \mathbb{R})=\mathcal{D}(\mathbb{S}^{n-1})\hat{\otimes}\mathcal{S}_{0}(\mathbb{R})$, where $X\hat{\otimes}Y$ is the topological tensor product space obtained as the completion of $X\otimes Y$ in, say, the $\pi$-topology  or, equivalently in these cases, the $\varepsilon$-topology \cite{treves}. We therefore have the following isomorphisms $\mathcal{S}'(\mathbb{Y}^{n+1})\cong \mathcal{S}'(\mathbb {H}, \mathcal {D}'(\mathbb S^{n-1}))\cong \mathcal{D}'(\mathbb{S}^{n-1},\mathcal{S}'(\mathbb{H}))$, the very last two spaces being spaces of vector-valued distributions \cite{schwartzv,silva,treves}. We shall identify these three spaces and write
\begin{equation}
\label{tensors}
\mathcal{S}'(\mathbb{Y}^{n+1})= \mathcal{S}'(\mathbb {H}, \mathcal {D}'(\mathbb S^{n-1}))=\mathcal{D}'(\mathbb{S}^{n-1},\mathcal{S}'(\mathbb{H})).
\end{equation}
The equality (\ref{tensors}) being realized via the standard identification
\begin{equation}
\label{tensors2}
\left\langle F,\varphi\otimes \Psi \right\rangle=\left\langle\left\langle F,\Psi \right\rangle,\varphi\right\rangle=  \left\langle \left\langle F,\varphi \right\rangle,\Psi\right\rangle, \     \    \ \Psi\in\mathcal{S}(\mathbb{H}),\ \varphi\in\mathcal{D}(\mathbb{S}^{n-1}).
\end{equation}
Likewise, we have the right to write
$$\mathcal{S}_{0}'(\mathbb{S}^{n-1}\times \mathbb{R})=\mathcal{S}_{0}'(\mathbb{R},\mathcal{D}'(\mathbb{S}^{n-1}))=\mathcal{D}'(\mathbb{S}^{n-1},\mathcal{S}'_{0}(\mathbb{R})).$$

We shall say that $F\in\mathcal{S}'(\mathbb{Y}^{n+1})$ is a function of slow growth in the variables $(b,a)\in\mathbb{H}$ if
$
\left\langle F(\mathbf{u},b,a),\varphi(\mathbf{u})\right\rangle_{\mathbf{u}}
$
is such for every $\varphi\in\mathcal{D}(\mathbb{S}^{n-1})$, namely, it is a function that satisfies the bound \begin{equation*}|\left\langle F(\mathbf{u},b,a),\varphi(\mathbf{u})\right\rangle_{\mathbf{u}} |\leq C\left(a+\frac{1}{a}\right)^{s}(1+|b|)^{s}, \  \  \  (b,a)\in\mathbb{H},\end{equation*}
for some positive constants $C=C_{\varphi}$ and $s=s_{\varphi}$.

\subsection{The ridgelet transform}\label{ridgelet transform functions}

Let $\psi \in {\mathcal S}({\mathbb R})$. For $\left(\mathbf{u},b,a \right)\in {\mathbb Y^{n+1}}$, where $\mathbf{u}$ is the orientation parameter, $b$ is the location parameter, and $a$ is the scale parameter, we define the  function ${\psi
}_{\mathbf{u},b,a }:{{\mathbb R}}^n\to {\mathbb C}$, called
\emph{ridgelet}, as

\[{\psi }_{\mathbf{u},b,a }\left(\mathbf{x}\right)=
\frac{1}{a}\psi
\left(\frac{\mathbf x\cdot \mathbf u -b}{a}\right),\  \  \ {\mathbf
x}\in {{\mathbb R}}^n.\]

This function is constant along hyperplanes $\mathbf x\cdot \mathbf u
= \textnormal{const.}$, called ``ridges".  In the orthogonal direction it is a
wavelet, hence the name ridgelet. The ridgelet transform ${\mathcal R}_{\psi}$ of an integrable function $ f\in
L^1({{\mathbb R}}^n)$ (or an integrable distribution $ f\in
\mathcal{D}'_{L^1}({{\mathbb R}}^n)$) is defined by

\begin{equation}\label{ridgelet}
{{\mathcal R}}_{\psi}f\left(\mathbf{u},b,a \right)=\int_{\mathbb
R^n}{f(\mathbf{x}){\overline{\psi
}_{\mathbf{u},b,a}}(\mathbf{x})d\mathbf{x}}=\left\langle f(\mathbf x),{\overline{\psi
}_{\mathbf{u},b,a}}(\mathbf{x})\right\rangle_\mathbf{x},\end{equation}
where $\left(\mathbf{u},b,a \right)\in {\mathbb
Y^{n+1}}$.
\begin{definition}\label{nondegenerate} Let $\psi\in\mathcal{S}(\mathbb{R})\setminus\{0\}$. A test function $\eta\in\mathcal{S}(\mathbb{R})$ is said to be a \emph{reconstruction neuronal activation function} for $\psi$ if the constant
\begin{equation*}
K_{\psi, \eta}:=(2\pi)^{n-1}\int^{\infty }_{-\infty}\overline{\widehat{\psi }}(\omega){\widehat{\eta }}(\omega)\frac{d\omega}{|\omega|^{n}}
\end{equation*}
is non-zero and finite.\end{definition}

It is not hard to see that every $\psi\in\mathcal{S}(\mathbb{R})\setminus\{0\}$ has a reconstruction neuronal activation function $\eta$ which may be chosen from $\mathcal{S}_{0}(\mathbb{R})$.

If $\psi$ and $\eta$ are as in Definition \ref{nondegenerate} and if $f\in L^{1}(\mathbb{R}^{n})$ is such that  $\widehat{f}\in L^{1}(\mathbb{R}^{n})$, then the following reconstruction formula  holds pointwisely \cite[Prop. 3.2]{KPSV},

\begin{equation} \label{reconstruction1}
f\left(\mathbf{x}\right)=\frac{1}{K_{\psi,\eta}}
\int_{\mathbb{S}^{n-1}}\int^{\infty
}_{0}\int^{\infty }_{-\infty }
{\mathcal R}_{\psi}f\left(\mathbf{u},b,a
\right){\eta }_{\mathbf{u},b,a}(
\mathbf{x})\frac{dbdad\mathbf{u}}{a^{n}}.
\end{equation}
Given $\psi\in\mathcal{S}(\mathbb{R})$, we introduce the \emph{ridgelet synthesis operator} as

\begin{equation}
\label{synthesis}
\mathcal{R}_{\psi}^{t} \Phi(\mathbf{x}):= \int_{\mathbb{S}^{n-1}}\int^{\infty
}_{0}\int^{\infty }_{-\infty } \Phi(\mathbf{u},b,a){\psi }_{\mathbf{u},b,a}(
\mathbf{x})\frac{dbdad\mathbf{u}}{a^{n}}, \  \  \  \mathbf{x}\in\mathbb{R}^{n}.
\end{equation}
The integral (\ref{synthesis}) is absolutely convergent, for instance, if $\Phi \in \mathcal{S}(\mathbb{Y}^{n+1})$. Observe that the reconstruction formula (\ref{reconstruction1}) can be rewritten as $K_{\psi,\eta} f(\mathbf{x})=(\mathcal{R}^{t}_{\eta} (\mathcal{R}_{\psi} f))(\mathbf{x})$.

We have shown in \cite{KPSV} that the two ridgelet mappings ${{\mathcal R}}_{\psi}:\mathcal S_{0}(\mathbb R^n)\to {\mathcal S}({\mathbb Y^{n+1}})\ $ and ${\mathcal {R}}^{t}_{\psi}:
{\mathcal S}({\mathbb Y^{n+1}})\to {{\mathcal S}}_{0}({{\mathbb R}}^n)$ are  continuous, provided that $\psi\in\mathcal{S}_{0}(\mathbb{R})$. These continuity results allow us to define the ridgelet transform of $f\in\mathcal S'_{0}(\mathbb R^n)$ with respect to $\psi\in\mathcal{S}_{0}(\mathbb{R})$ as the element $\mathcal{R}_{\psi}f\in \mathcal{S}'(\mathbb{Y}^{n+1})$ whose action on test functions is given by
\begin{equation*}
\langle \mathcal{R}_{\psi}{f}, \Phi\rangle:=\langle f, \mathcal{R}^{t}_{\overline{\psi}}{\Phi}\rangle, \ \ \  \Phi \in \mathcal{S}(\mathbb{Y}^{n+1}).\end{equation*}
Moreover, we define the ridgelet synthesis operator $\mathcal{R}^{t}_{\psi} :\mathcal{S}'(\mathbb{Y}^{n+1})\to\mathcal{S}'_{0}({\mathbb{R}^{n}})$ as

\begin{equation*}
\langle \mathcal{R}^{t}_{\psi}F, \phi\rangle:=\langle F, \mathcal{R}_{\overline{\psi}}{\phi}\rangle, \  \  \ F\in\mathcal{S}' (\mathbb{Y}^{n+1}),\ \ \ \phi \in \mathcal S({\mathbb R^{n}}).\end{equation*}
We immediately obtain that the ridgelet transform $\mathcal{R}_{\psi}:\mathcal{S}'_{0}({\mathbb{R}^{n}})\to \mathcal{S}'(\mathbb{Y}^{n+1})$ and the ridgelet synthesis operator $\mathcal{R}^{t}_{\psi} :\mathcal{S}'(\mathbb{Y}^{n+1})\to\mathcal{S}'_{0}({\mathbb{R}^{n}})$ are  continuous linear mappings. In addition \cite[Thrm 5.4]{KPSV}, the following inversion formula holds

\begin{equation}\label{eqidentity}
\operatorname{id}_{\mathcal{S}_{0}'(\mathbb {R}^n)}=\frac{1}{K_{\psi,\eta}}(\mathcal{R}_{\eta}^{t}\circ \mathcal{R_{\psi}}),
\end{equation}
\noindent where $\eta\in\mathcal{S}_{0}(\mathbb{R})$ is a reconstruction neuronal activation function for $\psi\in\mathcal{S}_{0}(\mathbb{R})\setminus\{0\}$.

It is very important to point out that the definition of the distributional ridgelet transform is consistent with (\ref{ridgelet}) for test functions in the following sense. If $f\in L^{1}(\mathbb{R}^{n})$, or more generally $f\in \mathcal{D}'_{L^{1}}(\mathbb{R}^{n})$, the function (\ref{ridgelet}) is continuous and bounded on $\mathbb{Y}^{n+1}$; one can then show \cite[Thrm 5.5]{KPSV} that
\begin{equation}
\label{ridgelet functions}
\langle \mathcal{R}_{\psi}{f}, \Phi\rangle= \int^{\infty
}_{0}\int^{\infty }_{-\infty }\int_{\mathbb{S}^{n-1}} \mathcal{R}_{\psi}f(\mathbf{u},b,a)\Phi(\mathbf{u},b,a) \frac{d\mathbf{u}dbda}{a^n}, \    \    \   \Phi\in\mathcal{S}(\mathbb{Y}^{n+1}).
\end{equation}

\subsection{The Radon transform}
Let $f$ be a function that is integrable on hyperplanes of $\mathbb R^{n}$. For $\mathbf u\in\mathbb{S}^{n-1}$ and $p\in
\mathbb{R}$, the equation $\mathbf{x}\cdot
\mathbf{u}=p$ specifies a hyperplane of $\mathbb R^n$. Then, the Radon transform of $f$ is defined as
\[ Rf(\mathbf{u},p)=Rf_{\mathbf{u}}(p):=\int_{\mathbf{x}\cdot\mathbf{u}=p}{f(\mathbf{x})d\mathbf{x}}.\]
The dual Radon transform (or back-projection) $R^{\ast}\varrho$ of the function $\varrho \in L^{\infty}(\mathbb{S}^{n-1}\times \mathbb{R})$ is defined as
\begin{equation*}
R^{\ast}\varrho(\mathbf{x})=\int_{\mathbb S^{n-1}}\varrho(\mathbf{u},\mathbf{x}\cdot \mathbf{u})d\mathbf{u}.
\end{equation*}
See Helgason's book \cite{helgason} for properties of the Radon transform. It can be shown \cite{KPSV} that the mappings $R:\mathcal{S}_{0}(\mathbb{R}^{n})\to \mathcal{S}_{0}(\mathbb{S}^{n-1}\times\mathbb{R})
$ and $R^{\ast}:\mathcal{S}_{0}(\mathbb{S}^{n-1}\times\mathbb{R})\to\mathcal{S}_{0}(\mathbb{R}^{n})$ are continuous. The first of this mappings is injective, while $R^{\ast}$ is surjective. Therefore, one can also extend the definition of the Radon transform to $\mathcal{S}'_{0}(\mathbb{R}^{n})$ via the formula
\begin{equation*}
\left\langle R f,\varrho \right\rangle=\left\langle f,R^{\ast}\varrho \right\rangle.
\end{equation*}
Clearly, $R:\mathcal{S}'_{0}(\mathbb{R}^n)\to \mathcal{S}'_{0}(\mathbb{S}^{n-1}\times \mathbb{R})$ is continuous and injective.

\subsection{The wavelet transform} Given $f\in\mathcal{S}'(\mathbb{R})$ and $\psi \in \mathcal S(\mathbb R)$ (or $f\in\mathcal{S}'_{0}(\mathbb{R})$ and $\psi \in \mathcal S_{0}(\mathbb R)$), the wavelet transform $\mathcal W_{\psi}f(b,a)$ of $f$ is defined by
\begin{equation*}
\mathcal W_{\psi}f(b,a)= \int_{\mathbb R}f(x) \frac{1}{a}\overline{\psi}\Big(\frac{x-b}{a}\Big)dx=\left\langle  f(x), \frac{1}{a}\overline{\psi}\Big(\frac{x-b}{a}\Big)\right\rangle_{x}, \ \ \ (b,a)\in\mathbb{H}.
\end{equation*}
We refer to Holschneider's book \cite{hols} for a distributional wavelet transform theory based on the spaces $\mathcal{S}_{0}(\mathbb{R})$, $\mathcal{S}(\mathbb{H})$, $\mathcal{S}'_{0}(\mathbb{R})$, and $\mathcal{S}'(\mathbb{H})$. We shall need here the wavelet transform of vector-valued distributions, as explained in \cite[Sect. 5 and 8]{prof44}.

We deal here with wavelet analysis on $\mathcal{S}_{0}(\mathbb{S}^{n-1}\times \mathbb{R})$ and $\mathcal{S}_{0}'(\mathbb{S}^{n-1}\times \mathbb{R})$. Given $\psi\in\mathcal{S}_{0}(\mathbb{R})$, we let $\mathcal{W}_{\psi}$ act on the real variable $p$ of functions (or distributions)  $g(\mathbf{u},p)$, that is,
\begin{equation}\label{waveletS}
\mathcal{W}_{\psi} g(\mathbf{u},b,a):= \int_{-\infty}^{\infty} \frac{1}{a}\overline{\psi}\left(\frac{p-b}{a}\right) g(\mathbf{u},p)dp= \left\langle  g(\mathbf{u},p), \frac{1}{a}\overline{\psi}\Big(\frac{p-b}{a}\Big)\right\rangle_{p},
\end{equation}
$(\mathbf{u},b,a)\in\mathbb{Y}^{n+1}$. Similarly, we define the wavelet synthesis operator on $\mathcal{S}(\mathbb{Y}^{n+1})$ as
\begin{equation*}
\mathcal{M}_{\psi}\Phi(\mathbf{u},p)= \int_{0}^{\infty}\int_{-\infty}^{\infty} \frac{1}{a}\psi\left(\frac{p-b}{a}\right)\Phi(\mathbf{u},b,a)\frac{dbda}{a}.
\end{equation*}
The mappings, \cite[Cor. 4.3]{KPSV}, $\mathcal{W}_{\psi}:\mathcal{S}_0
( \mathbb{S}^{n-1}\times\mathbb{R})\rightarrow \mathcal{S} (\mathbb{Y}^{n + 1})$ and $\mathcal{M}_{\psi}: \mathcal{S}
(\mathbb{Y}^{n+1})\rightarrow \mathcal{S}_{0}
(\mathbb{S}^{n-1}\times\mathbb{R}) $ are continuous. As remarked in Subsection \ref{spaces}, we have $\mathcal{S}_{0}'(\mathbb{S}^{n-1}\times \mathbb{R})=\mathcal{S}_{0}'(\mathbb{R},\mathcal{D}'(\mathbb{S}^{n-1}))$ and $\mathcal{S}'(\mathbb{Y}^{n + 1})=\mathcal{S}'(\mathbb{H}, \mathcal{D}'(\mathbb{S}^{n-1}))$. This allows us to interpret the wavelet transform (\ref{waveletS}),
\[\mathcal{W}_{\psi}:\mathcal{S}_{0}'(\mathbb{S}^{n-1}\times \mathbb{R})=\mathcal{S}_{0}'(\mathbb{R},\mathcal{D}'(\mathbb{S}^{n-1}))\to  \mathcal{S}'(\mathbb {H}, \mathcal {D}'(\mathbb S^{n-1}))=\mathcal{S}'(\mathbb{Y}^{n+1}),\]
as a wavelet transform with values in the DFS space $\mathcal{D}'(\mathbb{S}^{n-1})$. Actually, if $g\in\mathcal{S}_{0}'(\mathbb{S}^{n-1}\times\mathbb{R})$, then $\mathcal{W}_{\psi}g:\mathbb{H}\to \mathcal{D}'(\mathbb{S}^{n-1})$ is a smooth vector-valued function of slow growth on $\mathbb{H}$, whose action on test functions $\Phi\in\mathcal{S}(\mathbb{Y}^{n+1})$ is specified by
\begin{equation}
\label{waveletvector1}
\left\langle \mathcal{W}_{\psi}g,\Phi\right\rangle:=\int_{0}^{\infty}\int_{-\infty}^{\infty} \left\langle \mathcal{W}_{\psi}g(\mathbf{u},b,a),\Phi(\mathbf{u},b,a)\right\rangle_{\mathbf{u}}\frac{dbda}{a}\: .
\end{equation}
Implicit in (\ref{waveletvector1}) is the fact that we are using the measure $a^{-1}dbda$ as the \emph{standard measure} on $\mathbb{H}$ for the identification of functions of slow growth with distributions on $\mathbb{H}$. This choice is the natural one for wavelet analysis, in the sense that one can check that the following duality relation holds:
\begin{equation*}
\left\langle \mathcal{W}_{\psi}g,\Phi\right\rangle=\left\langle g,\mathcal{M}_{\overline{\psi}}\Phi\right\rangle,
\end{equation*}
for all for $g\in\mathcal{S}_{0}'(\mathbb{S}^{n-1}\times\mathbb{R})$ and $\Phi\in\mathcal{S}(\mathbb{Y}^{n+1}).$ (See \cite[Sect. 5 and 8]{prof44} for additional comments on the vector-valued wavelet transform.)

\subsection{Relation between the ridgelet, Radon and wavelet transforms}

The ridgelet transform is intimately connected with the
Radon and wavelet transforms. Changing variables in (\ref{ridgelet}) to $\mathbf{x}=p\mathbf{u}+\mathbf{y}$, where $p\in\mathbb{R}$ and $\mathbf{y}$ runs over the hyperplane perpendicular to $\mathbf{u}$, one readily obtains

\begin{equation}\label{rad-rid}{{\mathcal R}}_{\psi}f\left(\mathbf{u},b,a \right)
=\mathcal W_{\psi}(Rf_{\mathbf{u}})(b,a),
\end{equation}
where $\mathcal{W}_{\psi}$ is a one-dimensional wavelet transform. The relation (\ref{rad-rid}) holds if $f\in L^{1}(\mathbb{R}^{n})$ (or more generally if $f\in\mathcal{D}'_{ L^{1}}(\mathbb{R}^{n})$). Thus, ridgelet analysis can be seen as a form of wavelet analysis
in the Radon domain, i.e., the ridgelet transform is precisely the
application of a one dimensional wavelet transform to the slices
of the Radon transform where $\mathbf{u}$ remains fixed and $p$
varies.

There is also an analog of (\ref{rad-rid}) for $f \in \mathcal S'_{0}(\mathbb R^{n})$. One can show \cite[Thm. 7.1]{KPSV} that if $f \in \mathcal S'_{0}(\mathbb R^{n})$ and $\psi\in\mathcal{S}_{0}(\mathbb{R})$, then
 \begin{equation}
\label{eqdesingular}
\left\langle \mathcal{R}_{\psi}f,\Phi\right\rangle= \int_{0}^{\infty}\int_{-\infty}^{\infty} \left\langle \mathcal{W}_{\psi}(Rf)(\mathbf{u},b,a),\Phi(\mathbf{u},b,a)\right\rangle_{\mathbf{u}} \frac{dbda}{a^{n}}, \  \  \  \Phi\in\mathcal{S}(\mathbb{Y}^{n+1}).
\end{equation}
Furthermore, $\mathcal{R_{\psi}}f\in C^{\infty}(\mathbb H, \mathcal D'(\mathbb S^{n-1}))$ and is of slow growth on $\mathbb{H}$, and, if $\psi\neq0$, the following desingularization formula holds  \cite[Thrm 7.2]{KPSV}.

\begin{equation}
\label{eq desingular 2}
\langle f,\phi\rangle= \frac{1}{K_{\psi,\eta}}\int_{0}^{\infty}\int_{\mathbb{R}} \left\langle \mathcal{W}_{\psi}(Rf)(\mathbf{u},b,a),\mathcal{R}_{\overline{\eta}}\:\phi (\mathbf{u},b,a)\right\rangle_{\mathbf{u}} \frac{dbda}{a^{n}},
\end{equation}
for all $\phi\in\mathcal{S}_{0}(\mathbb{R}^{n})$, where $\eta\in\mathcal{S}_{0}(\mathbb{R})$ is a reconstruction neuronal activation function for $\psi$.

It is interesting to compare (\ref{waveletvector1}) with (\ref{eqdesingular}). Define first the multiplier operators
\begin{equation*}
J_{s}:\mathcal{S}'(\mathbb{Y}^{n+1})\to \mathcal{S}'(\mathbb{Y}^{n+1}), \ \ \ (J_{s}F)(\mathbf{u},b,a)=a^{s}F(\mathbf{u},b,a),  \ \  \ s\in\mathbb{R}.
\end{equation*}
According to (\ref{waveletvector1}), the relation (\ref{eqdesingular}) for distributions might be rewritten as
\begin{equation}
\label{rid-rad-wav}
\mathcal{R}_{\psi}= J_{1-n}\circ\mathcal{W}_{\psi}\circ R.
\end{equation}
Observe that (\ref{rid-rad-wav}) is not in contradiction with (\ref{rad-rid}). Indeed, if $f\in L^{1}(\mathbb{R}^{n})$ (or more generally $f\in \mathcal{D}'_{L^{1}}(\mathbb{R}^{n})$), then (\ref{rad-rid}) expresses an equality between functions, (\ref{eqdesingular}) is then in agreement with (\ref{ridgelet functions}), whereas (\ref{rid-rad-wav}) simply responds to our convention (\ref{waveletvector1}) of using the measure $a^{-1}dbda$ for identifying wavelet transforms with vector-valued distributions on $\mathbb{H}$. We also have to warn the reader that under this convention, the smooth function $F_{\varphi}(b,a)=\langle\mathcal{R_{\psi}}f(\mathbf{u},b,a), \varphi(\mathbf{u}) \rangle_{\textbf{u}}$ from the standard identification (\ref{tensors2}), where $\varphi\in\mathcal{D}(\mathbb{S}^{n-1})$, is the one that satisfies
\begin{equation}
\label{eq extra}
\langle\mathcal{R_{\psi}}f(\mathbf{u},b,a), \varphi(\mathbf{u})\Psi(b,a) \rangle_{\textbf{u}}=\int_{0}^{\infty}\int_{-\infty}^{\infty}F_{\varphi}(b,a)\Psi(b,a)\frac{dbda}{a}, \  \  \ \Psi\in\mathcal{S}(\mathbb{H});
\end{equation}
so that if $f\in\mathcal{D}'_{L^{1}}(\mathbb{R}^{n})$, we have, as pointwise equality between functions,
\begin{equation}
\label{eq extra2}
\langle\mathcal{R_{\psi}}f(\mathbf{u},b,a), \varphi(\mathbf{u})\rangle_{\textbf{u}}=a^{-(n-1)}\int_{\mathbb{S}^{n-1}} \mathcal{R_{\psi}}f(\mathbf{u},b,a) \varphi(\mathbf{u}) d\mathbf{u}.
\end{equation}
\section{Ridgelet characterization of bounded subsets of $\mathcal S'_{0}(\mathbb R^{n})$}\label{section bounded sets}
This section is dedicated to prove a characterization of bounded subsets of $\mathcal S'_{0}(\mathbb R^{n})$ via the ridgelet transform. We begin the ensuing useful proposition. Note that \cite{helgason} $R(\mathcal{S}_{0}(\mathbb{R}^{n}))$ is a closed subspace of $\mathcal{S}_{0}(\mathbb{S}^{n-1}\times\mathbb{R})$. The open mapping theorem implies that $R:\mathcal{S}_{0}(\mathbb{R}^{n})\to R(\mathcal{S}_{0}(\mathbb{R}^{n}))$
is an isomorphism of topological vector spaces. We prove a similar result for the distributional Radon transform.
\begin{proposition} \label{radon iso}The Radon transform $R:\mathcal{S}_{0}'(\mathbb{R}^{n})\to R(\mathcal{S}_{0}'(\mathbb{R}^{n}))$ is an isomorphism of topological vector spaces.
\end{proposition}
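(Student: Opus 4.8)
The plan is to deduce the isomorphism property from the inversion formula \eqref{eqidentity} together with the continuity of the Radon transform, its dual, and the ridgelet operators. Since $R:\mathcal{S}_0'(\mathbb{R}^n)\to\mathcal{S}_0'(\mathbb{S}^{n-1}\times\mathbb{R})$ is continuous, linear, and injective, it only remains to show that the inverse map $R^{-1}:R(\mathcal{S}_0'(\mathbb{R}^n))\to\mathcal{S}_0'(\mathbb{R}^n)$ is continuous, where $R(\mathcal{S}_0'(\mathbb{R}^n))$ carries the subspace topology inherited from $\mathcal{S}_0'(\mathbb{S}^{n-1}\times\mathbb{R})$. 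Because all spaces in sight are duals of reflexive (indeed Montel) spaces, equipped with the strong topology, it suffices to exhibit a continuous left inverse for $R$ on $\mathcal{S}_0'(\mathbb{R}^n)$ that is expressed through continuous operations performed on $Rf$.

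First I would fix $\psi\in\mathcal{S}_0(\mathbb{R})\setminus\{0\}$ and a reconstruction neuronal activation function $\eta\in\mathcal{S}_0(\mathbb{R})$ for $\psi$, so that \eqref{eqidentity} gives $\operatorname{id}_{\mathcal{S}_0'(\mathbb{R}^n)}=K_{\psi,\eta}^{-1}\,\mathcal{R}_\eta^{t}\circ\mathcal{R}_\psi$. Next, using the factorization \eqref{rid-rad-wav}, namely $\mathcal{R}_\psi=J_{1-n}\circ\mathcal{W}_\psi\circ R$, I would rewrite this as
\begin{equation*}
\operatorname{id}_{\mathcal{S}_0'(\mathbb{R}^n)}=\frac{1}{K_{\psi,\eta}}\,\mathcal{R}_\eta^{t}\circ J_{1-n}\circ\mathcal{W}_\psi\circ R.
\end{equation*}
The composition $\Lambda:=K_{\psi,\eta}^{-1}\,\mathcal{R}_\eta^{t}\circ J_{1-n}\circ\mathcal{W}_\psi$ maps $\mathcal{S}_0'(\mathbb{S}^{n-1}\times\mathbb{R})$ continuously into $\mathcal{S}_0'(\mathbb{R}^n)$: indeed $\mathcal{W}_\psi:\mathcal{S}_0'(\mathbb{S}^{n-1}\times\mathbb{R})\to\mathcal{S}'(\mathbb{Y}^{n+1})$ is continuous (the vector-valued wavelet transform), $J_{1-n}:\mathcal{S}'(\mathbb{Y}^{n+1})\to\mathcal{S}'(\mathbb{Y}^{n+1})$ is continuous (it is transpose to multiplication by $a^{1-n}$ on $\mathcal{S}(\mathbb{Y}^{n+1})$, which is continuous), and $\mathcal{R}_\eta^{t}:\mathcal{S}'(\mathbb{Y}^{n+1})\to\mathcal{S}_0'(\mathbb{R}^n)$ is continuous. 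Hence $\Lambda\circ R=\operatorname{id}_{\mathcal{S}_0'(\mathbb{R}^n)}$, so $\Lambda$ restricted to $R(\mathcal{S}_0'(\mathbb{R}^n))$ is exactly $R^{-1}$, and it is continuous as the restriction of a continuous map. Therefore $R$ is a topological isomorphism onto its range.

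I expect the main technical point to be the care needed in handling the factorization \eqref{rid-rad-wav} and the continuity of $\mathcal{W}_\psi$ as a \emph{vector-valued} map on the DFS space $\mathcal{D}'(\mathbb{S}^{n-1})$, together with confirming that $J_{1-n}$ indeed acts continuously on $\mathcal{S}'(\mathbb{Y}^{n+1})$ in the sense needed; once one is comfortable with the identifications in \eqref{tensors}–\eqref{tensors2} and the conventions surrounding the standard measure $a^{-1}dbda$, this reduces to routine verifications using results quoted from \cite{KPSV}. An alternative, perhaps cleaner, phrasing avoids $\mathcal{W}_\psi$ and $J_{1-n}$ altogether: define $\Lambda:=K_{\psi,\eta}^{-1}\,\mathcal{R}_\eta^{t}\circ\mathcal{R}_\psi$ directly and observe that $\mathcal{R}_\psi$ factors through $R$ via \eqref{eqdesingular}, so that $\mathcal{R}_\psi f$ depends only on $Rf$ through a continuous operation; this again furnishes a continuous left inverse of $R$ and closes the argument.
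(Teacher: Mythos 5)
Your argument is correct, but it is genuinely different from the paper's. The paper proves the proposition by soft functional analysis alone: since $R^{\ast}$ is a continuous surjection between Fr\'{e}chet spaces, its transpose $R$ is continuous and injective with weakly (hence, by reflexivity, strongly) closed range, and openness onto the range is then obtained from Pt\'{a}k's open mapping theory, using that $\mathcal{S}_{0}'(\mathbb{R}^{n})$ is fully complete (it is the strong dual of a reflexive Fr\'{e}chet space) and that the range, being a closed subspace of a DFS space, is DFS and hence barrelled. You instead exhibit an explicit continuous left inverse $\Lambda=K_{\psi,\eta}^{-1}\,\mathcal{R}_{\eta}^{t}\circ J_{1-n}\circ\mathcal{W}_{\psi}$ defined on all of $\mathcal{S}_{0}'(\mathbb{S}^{n-1}\times\mathbb{R})$, using the inversion formula \eqref{eqidentity} together with the factorization \eqref{rid-rad-wav} (i.e.\ \eqref{eqdesingular}); the needed continuity facts are indeed available, since $\mathcal{W}_{\psi}$ is the transpose of the continuous synthesis operator $\mathcal{M}_{\overline{\psi}}$, $J_{1-n}$ is the transpose of the continuous multiplier $a^{1-n}$ on $\mathcal{S}(\mathbb{Y}^{n+1})$, and $\mathcal{R}_{\eta}^{t}$ is continuous on $\mathcal{S}'(\mathbb{Y}^{n+1})$, all quoted from \cite{KPSV}, so there is no circularity. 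Your route is more constructive and yields slightly more: $P=R\circ\Lambda$ is a continuous projection of $\mathcal{S}_{0}'(\mathbb{S}^{n-1}\times\mathbb{R})$ onto $R(\mathcal{S}_{0}'(\mathbb{R}^{n}))$, so the range is complemented and closed (closedness, which the paper also establishes, follows since the range is the kernel of $\operatorname{id}-P$). What the paper's argument buys is independence from the ridgelet machinery: it works whenever one has the transpose of a continuous surjection between Fr\'{e}chet spaces with DFS duals, with no inversion formula in hand, and it keeps the proposition available as a purely Radon-transform statement. One small remark: your closing ``alternative phrasing'' with $\Lambda=K_{\psi,\eta}^{-1}\,\mathcal{R}_{\eta}^{t}\circ\mathcal{R}_{\psi}$ is looser, since that operator is a priori defined on $\mathcal{S}_{0}'(\mathbb{R}^{n})$ rather than on the Radon range; to make it rigorous you are anyway led back to the first formulation through $\mathcal{W}_{\psi}$ and $J_{1-n}$.
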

\begin{proof} Since $R^{\ast}:\mathcal{S}_{0}(\mathbb{S}^{n-1}\times\mathbb{R})\to\mathcal{S}_{0}(\mathbb{R}^{n})$ is a continuous surjection between Fr\'{e}chet spaces, its transpose $R:\mathcal{S}_{0}'(\mathbb{R}^{n})\to \mathcal{S}'_{0}(\mathbb{S}^{n-1}\times\mathbb{R})$ must be continuous, injective, and must have weakly closed range \cite[Chap. 37]{treves}. The subspace $R(\mathcal{S}_{0}'(\mathbb{R}^{n}))$ is thus strongly closed because $\mathcal{S}'_{0}(\mathbb{S}^{n-1}\times\mathbb{R})$ is reflexive.  Pt\'{a}k's theory \cite{kotheII,rr} applies to show that $R:\mathcal{S}_{0}'(\mathbb{R}^{n})\to R(\mathcal{S}_{0}'(\mathbb{R}^{n}))$ is open if we verify that $\mathcal{S}_{0}'(\mathbb{R}^{n})$ is fully complete ($B$-complete in the sense of Pt\'{a}k) and that $R(\mathcal{S}_{0}'(\mathbb{R}^{n}))$ is barrelled. It is well known \cite[p. 123]{rr} that the strong dual of a reflexive Fr\'{e}chet space is fully complete, so $\mathcal{S}_{0}'(\mathbb{R}^{n})$, as a DFS space, is fully complete. Now, a closed subspace of a DFS space must itself be a DFS space. Since $\mathcal{S}'_{0}(\mathbb{S}^{n-1}\times\mathbb{R})$ is a DFS space, we obtain that $R(\mathcal{S}_{0}'(\mathbb{R}^{n}))$ is a DFS space and hence barrelled.
\end{proof}
We then have,
\begin{theorem}\label{boundedset}Let $\psi \in \mathcal S_{0}(\mathbb R)\setminus\{0\}$ and let $\mathfrak{B}\subset \mathcal
S'_{0}({{\mathbb R^n}})$. The following three statements are equivalent:
\begin{itemize}
\item [$(i)$] $\mathfrak{B}$ is bounded in  $\mathcal
S'_{0}({{\mathbb R^n}})$.
\item [$(ii)$] There are positive constants $l=l_{\mathfrak{B}}$ and $m=m_{\mathfrak{B}}$ such that for every $\varphi \in \mathcal {D}(\mathbb S^{n-1})$ one can find $C=C_{\varphi,\mathfrak{B}}>0$ with
\begin{equation}\label{bounded}|\langle \mathcal{R_{\psi}}f(\mathbf{u},b,a), \varphi(\mathbf{u}) \rangle_{\textbf{u}}|\leq C\left(a+\frac{1}{a}\right)^{l}(1+|b|)^{m}, \,\ \mbox{for all} \, \, \ (b,a)\in\mathbb{H} \mbox{ and } f \in \mathfrak{B}.\end{equation}
\item [$(iii)$] $\mathcal{R}_{\psi}(\mathfrak{B})$ is bounded in  $\mathcal
S'({{\mathbb Y^{n+1}}})$.

\end{itemize}
\end{theorem}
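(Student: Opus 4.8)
The plan is to run the cycle $(i)\Rightarrow(ii)\Rightarrow(iii)\Rightarrow(i)$. The implication $(iii)\Rightarrow(i)$ is the easy one: by the inversion formula (\ref{eqidentity}) we have $\mathfrak{B}=K_{\psi,\eta}^{-1}\,\mathcal{R}^{t}_{\eta}\big(\mathcal{R}_{\psi}(\mathfrak{B})\big)$ for a reconstruction neuronal activation function $\eta\in\mathcal{S}_{0}(\mathbb{R})$ for $\psi$, and $\mathcal{R}^{t}_{\eta}:\mathcal{S}'(\mathbb{Y}^{n+1})\to\mathcal{S}'_{0}(\mathbb{R}^{n})$ being continuous carries bounded sets onto bounded sets. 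The two substantive implications both exploit the Radon factorization (\ref{rid-rad-wav}) and the smooth slow-growth realization $\mathcal{R}_{\psi}f\in C^{\infty}(\mathbb{H},\mathcal{D}'(\mathbb{S}^{n-1}))$.

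For $(i)\Rightarrow(ii)$ the crucial preliminary step is an explicit formula for $F_{\varphi}(b,a):=\langle\mathcal{R}_{\psi}f(\mathbf{u},b,a),\varphi(\mathbf{u})\rangle_{\mathbf{u}}$. With $\varrho_{b,a}(\mathbf{u},p):=\varphi(\mathbf{u})\,a^{-1}\overline{\psi}\big((p-b)/a\big)$, which lies in $\mathcal{S}_{0}(\mathbb{S}^{n-1}\times\mathbb{R})$ precisely because $\psi\in\mathcal{S}_{0}(\mathbb{R})$, one unwinds the definition $\langle\mathcal{R}_{\psi}f,\varphi\otimes\Psi\rangle=\langle f,\mathcal{R}^{t}_{\overline{\psi}}(\varphi\otimes\Psi)\rangle$, recognizes $\mathcal{R}^{t}_{\overline{\psi}}(\varphi\otimes\Psi)=\int_{0}^{\infty}\!\!\int_{-\infty}^{\infty}a^{-n}\Psi(b,a)\,R^{\ast}\varrho_{b,a}\,dbda$ as an absolutely convergent $\mathcal{S}_{0}(\mathbb{R}^{n})$-valued integral, moves $f$ inside it, and compares with (\ref{eq extra}); since both sides are continuous in $(b,a)$ this yields $F_{\varphi}(b,a)=a^{1-n}\langle f,R^{\ast}\varrho_{b,a}\rangle$. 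Because $\mathfrak{B}$ is bounded in the strong dual of the Fr\'echet space $\mathcal{S}_{0}(\mathbb{R}^{n})$ it is equicontinuous, so $|\langle f,\phi\rangle|\le C\,p(\phi)$ for all $f\in\mathfrak{B}$ and a fixed continuous seminorm $p$. Composing with the continuity of $R^{\ast}:\mathcal{S}_{0}(\mathbb{S}^{n-1}\times\mathbb{R})\to\mathcal{S}_{0}(\mathbb{R}^{n})$ and using the routine estimates $\sup_{p}(1+|p|)^{r}\,|\psi^{(k)}((p-b)/a)|\le(1+|b|)^{r}(1+a)^{r}\sup_{t}(1+|t|)^{r}|\psi^{(k)}(t)|$ together with $a^{-1-k}(1+a)^{r}\le 2^{r}(a+1/a)^{r+1+M}$ for $k\le M$, one bounds $p(R^{\ast}\varrho_{b,a})$ by a constant (depending on $\mathfrak{B}$) times $A_{\varphi}(1+|b|)^{r}(a+1/a)^{r+1+M}$, where $A_{\varphi}$ gathers finitely many of the seminorms $\sup_{\mathbf{u}}|\triangle_{\mathbf{u}}^{j}\varphi|$ and the orders $r,M$ come from $p$ and hence depend only on $\mathfrak{B}$. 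Since $a^{1-n}\le(a+1/a)^{n-1}$, this is exactly (\ref{bounded}) with $l:=r+M+n$ and $m:=r$, neither depending on $\varphi$.

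For $(ii)\Rightarrow(iii)$ one first upgrades (ii) to a bound uniform in $\varphi$. The map $T(\varphi):=\sup_{f\in\mathfrak{B}}\sup_{(b,a)\in\mathbb{H}}\big((a+1/a)^{l}(1+|b|)^{m}\big)^{-1}|F_{\varphi}(b,a)|$ is a seminorm on $\mathcal{D}(\mathbb{S}^{n-1})$, finite by hypothesis, and a supremum of continuous seminorms (each $\varphi\mapsto|\langle\mathcal{R}_{\psi}f(\mathbf{u},b,a),\varphi\rangle_{\mathbf{u}}|$ is continuous because $\mathcal{R}_{\psi}f(\mathbf{u},b,a)\in\mathcal{D}'(\mathbb{S}^{n-1})$), hence lower semicontinuous; by the Banach--Steinhaus theorem on the Fr\'echet space $\mathcal{D}(\mathbb{S}^{n-1})$ it is therefore continuous, so $T(\varphi)\le C_{0}\big(\sup_{\mathbf{u}}|\varphi|+\sup_{\mathbf{u}}|\triangle_{\mathbf{u}}^{N}\varphi|\big)$ for some $N$ and $C_{0}$. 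Now fix $\Phi\in\mathcal{S}(\mathbb{Y}^{n+1})$; combining (\ref{rid-rad-wav}) with (\ref{eqdesingular}) gives $\langle\mathcal{R}_{\psi}f,\Phi\rangle=\int_{0}^{\infty}\!\!\int_{-\infty}^{\infty}\langle\mathcal{R}_{\psi}f(\mathbf{u},b,a),\Phi(\mathbf{u},b,a)\rangle_{\mathbf{u}}\,a^{-1}\,dbda$, and applying the bound for $T$ to the test function $\mathbf{u}\mapsto\Phi(\mathbf{u},b,a)$ and estimating $\sup_{\mathbf{u}}|\Phi(\mathbf{u},b,a)|$ and $\sup_{\mathbf{u}}|\triangle_{\mathbf{u}}^{N}\Phi(\mathbf{u},b,a)|$ by $\rho^{0,0,0}_{l+2,m+2}(\Phi)$ and $\rho^{0,0,N}_{l+2,m+2}(\Phi)$ yields $|\langle\mathcal{R}_{\psi}f(\mathbf{u},b,a),\Phi(\mathbf{u},b,a)\rangle_{\mathbf{u}}|\le C_{0}C_{\Phi}(a+1/a)^{-2}(1+|b|)^{-2}$. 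As $\int_{0}^{\infty}(a+1/a)^{-2}\,a^{-1}da$ and $\int_{-\infty}^{\infty}(1+|b|)^{-2}\,db$ are finite, integrating gives a bound on $|\langle\mathcal{R}_{\psi}f,\Phi\rangle|$ independent of $f\in\mathfrak{B}$, so $\mathcal{R}_{\psi}(\mathfrak{B})$ is weakly, hence strongly, bounded in $\mathcal{S}'(\mathbb{Y}^{n+1})$.

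The main obstacle is $(i)\Rightarrow(ii)$, and within it the demand that $l$ and $m$ be independent of $\varphi$: this is what forces the argument to pass through the Radon transform and through the equicontinuity of $\mathfrak{B}$, which produces a single seminorm $p$ whose orders determine $l,m$ once and for all, all the $\varphi$-dependence being confined to the constant $A_{\varphi}$. By contrast $(ii)\Rightarrow(iii)$ is lighter, its only genuinely non-routine ingredient being the passage from the pointwise bound (ii) to a $\varphi$-uniform one, i.e.\ the fact that a finite-valued lower semicontinuous seminorm on a Fr\'echet space is continuous.
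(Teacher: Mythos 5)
Your argument is correct, but it follows a genuinely different route from the paper's for the two substantive implications. For $(i)\Rightarrow(ii)$ the paper transfers boundedness through the Radon transform via Proposition \ref{radon iso} and then invokes the vector-valued wavelet result \cite[Prop.\ 3.2]{prof44} to produce the $\varphi$-independent exponents; you instead argue directly: bounded subsets of the dual of the Fr\'echet (hence barrelled) space $\mathcal{S}_{0}(\mathbb{R}^{n})$ are equicontinuous, so a single continuous seminorm $p$ controls all $f\in\mathfrak{B}$, and the identity $\langle \mathcal{R}_{\psi}f(\mathbf{u},b,a),\varphi(\mathbf{u})\rangle_{\mathbf{u}}=a^{1-n}\langle f,R^{\ast}\varrho_{b,a}\rangle$ (which is just (\ref{rid-rad-wav}) unwound, and which you rederive correctly) together with explicit seminorm estimates of $R^{\ast}\varrho_{b,a}$ yields (\ref{bounded}) with $l,m$ determined by $p$ alone. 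For $(ii)\Rightarrow(iii)$ the paper uses a double Banach--Steinhaus argument in $L_{b}(\mathcal{S}(\mathbb{H}),\mathcal{D}'(\mathbb{S}^{n-1}))$; you use one barrel/Banach--Steinhaus argument on $C^{\infty}(\mathbb{S}^{n-1})$ to upgrade (ii) to a bound uniform in $\varphi$, then integrate against $a^{-1}db\,da$ using (\ref{eqdesingular})/(\ref{eq extra}) to get weak boundedness, which suffices since $\mathcal{S}(\mathbb{Y}^{n+1})$ is Fr\'echet. What your route buys is self-containedness (no appeal to Proposition \ref{radon iso} nor to the external wavelet machinery of \cite{prof44}) and explicit exponents $l=r+M+n$, $m=r$; what the paper's route buys is brevity and economy, since both Proposition \ref{radon iso} and the \cite{prof44} apparatus are reused elsewhere (Lemma \ref{radon quasiasymptotics lemma}, Proposition \ref{abelRT}, Theorem \ref{tauberRT}). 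Two steps you treat as routine deserve a sentence each in a polished write-up: the interchange of $f$ with the absolutely convergent $\mathcal{S}_{0}(\mathbb{R}^{n})$-valued integral defining $\mathcal{R}^{t}_{\overline{\psi}}(\varphi\otimes\Psi)$, and the fact that the seminorms $\varphi\mapsto\sup_{\mathbf{u}}|\triangle_{\mathbf{u}}^{k}\varphi|$ generate the topology of $C^{\infty}(\mathbb{S}^{n-1})$ (elliptic regularity on the compact sphere); both are standard and do not affect correctness.
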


\begin{proof} By Proposition \ref{radon iso}, $\mathfrak{B}$ is bounded if and only if $\mathfrak{B}_{1}:=R(\mathfrak{B})$ is bounded in $\mathcal{S}'_{0}(\mathbb{S}^{n-1}\times \mathbb{R})=\mathcal{S}'_{0}(\mathbb{R},\mathcal{D}'(\mathbb{S}^{n-1}))$. On the other hand, in view of (\ref{rid-rad-wav}), the estimate (\ref{bounded}) is equivalent to one of the form
\begin{equation}\label{bounded2}|\langle \mathcal{W}_{\psi}h(\mathbf{u},b,a), \varphi(\mathbf{u}) \rangle_{\textbf{u}}|\leq C\left(a+\frac{1}{a}\right)^{s}(1+|b|)^{m}, \,\ \mbox{for all} \, \, \ h\in \mathfrak{B}_{1}.\end{equation}

$(i)\Rightarrow(ii)$. Assume that $\mathfrak{B}_{1}$ is bounded. As a DFS space, $\mathcal{D}'(\mathbb{S}^{n-1})$ is the regular inductive limit of an inductive sequence of Banach spaces, \cite[Prop. 3.2]{prof44} then implies the existence of $s=s_{\mathfrak{B}}$ and $m=m_{\mathfrak{B}}$ such that $(a+1/a)^{-s}(1+|b|)^{-m}\mathcal{W}_{\psi}(\mathfrak{B}_{1})$ is bounded in $\mathcal{D}'(\mathbb{S}^{n-1})$, which implies (\ref{bounded2}).

$(ii)\Rightarrow(iii)$. If the estimates (\ref{bounded}) hold, we clearly have that for fixed $\varphi\in\mathcal{D}(\mathbb{S})$ and $\Psi\in\mathcal{S}(\mathbb{H})$ the quantity $\langle \mathcal{R_{\psi}}f(\mathbf{u},b,a), \varphi(\mathbf{u}) \Psi (b,a)\rangle$ (see (\ref{eq extra})) remains uniformly bounded for $f\in\mathfrak{B}$. A double application of the Banach-Steinhaus theorem shows that $\mathcal{R}_{\psi}(\mathfrak{B})$ is a bounded subset of $L_{b}(\mathcal{S}(\mathbb{H}),\mathcal{D}'(\mathbb{S}^{n-1}))=:\mathcal{S}'(\mathbb{H},\mathcal{D}'(\mathbb{S}^{n-1}))$ ($=\mathcal
S'({{\mathbb Y^{n+1}}})$).

$(iii)\Rightarrow(i)$. Let $\eta\in\mathcal{S}_{0}(\mathbb{R})$. Since $\mathcal{R}^{t}_{\eta}$ is continuous, it maps $\mathcal{R}_{\psi}(\mathfrak{B})$ into a bounded subset of $\mathcal{S}'_{0}(\mathbb{R}^{n})$. That $\mathfrak{B}$ is bounded follows at once from the inversion formula (\ref{eqidentity}).

\end{proof}

\section{ Abelian and Tauberian theorems}\label{section abel-tauber}
In this last section we characterize the quasiasymptotic behavior of elements  of ${\mathcal S}'_{0}{\mathbf (}{\mathbb R}^{n}{\mathbf )}$ in terms of Abelian and Tauberian theorems for the ridgelet transform.
\subsection{Quasiasymptotics} We briefly explain in this subsection the notion of quasiasymptotics of distributions. For more detailed accounts, see the books \cite{estrada,PST,PSV,VDZ} (see also \cite{vindas1,vindasqb,Vindas4}). This notion measures the asymptotic behavior of a distribution by comparison with Karamata regularly varying functions \cite{seneta}. A measurable real-valued function, defined and positive on an interval of the form ${\rm (0,}A{\rm ]}$ (resp. ${\rm [}A{\rm ,}\infty {\rm ))}$, is called \textit{slowly varying } at the origin (resp. at infinity) if

\[\lim_{\lambda \rightarrow 0^{+}}\frac{L(a \lambda)}{L(\lambda)}=1 \ \ \  \left({\rm resp. } \lim_{\lambda \rightarrow \infty}\right) \ \ \ {\rm for \ \ each} \quad a>0.\]
Throughout the rest of the article $L$ stands for a slowly varying function at the origin (resp. at infinity). We say that the distribution $f\in {\mathcal S}'_{0}{\rm (}{{\mathbb R}}^n{\rm )}$ has \textit{ quasiasymptotic behavior} of degree $\alpha \in {\mathbb R}$ at the origin (resp. at infinity) with respect to $L$ if there exists $g\in {\mathcal S}_{0}'{\rm (}{{\mathbb R}}^n{\rm )}$ such that for each $\phi\in {\mathcal S}_{0}{\rm (}{{\mathbb R}}^n{\rm )}$
\begin{equation*}
\mathop{{\rm lim}}_{\lambda \to 0^{{\rm +}}}\left\langle
\frac{f\left(\lambda\mathbf{x}\right)}{{\lambda}^{\alpha }L\left(\lambda\right)}{\rm
,\ }\phi\left(\mathbf{x}\right)\right\rangle =\left\langle
g\left(\mathbf{x}\right),\phi\left(\mathbf{x}\right)\right\rangle \ \ \  \left({\rm resp. } \lim_{\lambda \rightarrow \infty}\right) .
\end{equation*}
We employ the following notation for the quasiasymptotic behavior:
\begin{equation} \label{quasieq1}f\left(\lambda\mathbf{x}\right){\rm \sim }{\lambda }^{\alpha }L{\rm (}\lambda {\rm )}g{\rm (}\mathbf{x}{\rm )} \ \ \ \mbox{as }\lambda\to0^{+} \ (\mbox{resp. }\lambda\to\infty ) \quad \mbox{in }\ \ {\mathcal S}'_{0}{\rm (}{{\mathbb R}}^n{\rm ),}
\end{equation}
which should always be interpreted in the weak topology of ${\mathcal S}'_{0}{\rm (}{{\mathbb R}}^n{\rm )}$. One can prove \cite{PSV} that $g$
must be homogeneous of degree $\alpha $, namely, $g\left(a{\mathbf x}\right){\rm =}a^{\alpha }g\left({\mathbf
x}\right)$, for each $a>0$.

Likewise, one can introduce quasiasymptotic boundedness \cite{vindasqb}. We write
\begin{equation} \label{quasieq2}f\left(\lambda\mathbf{x}\right)=O({\lambda }^{\alpha }L{\rm (}\lambda {\rm )})\ \ \ \mbox{as }\lambda\to0^{+} \ (\mbox{resp. }\lambda\to\infty )\quad \mbox{in } \ {\mathcal S}'_{0}{\rm (}{{\mathbb R}}^n{\rm ),}
\end{equation}
if the corresponding growth order bound holds after evaluation at each test function from $\mathcal{S}_{0}(\mathbb{R}^{n})$. All these notions admit obvious generalizations to vector-valued distributions (see e.g. \cite{DZ1,DZ2,prof44}). For example, we might consider quasiasymptotics of distributions from $\mathcal{S}'_{0}(\mathbb{R},\mathcal{D}'(\mathbb{S}^{n-1}))= \mathcal{S}'_{0}(\mathbb{S}^{n-1}\times \mathbb{R})$ with respect to the radial variable $p$.

\subsection{An Abelian result} We provide here an Abelian proposition for the ridgelet transform. The following simple but useful lemma connects the quasiasymptotic properties of a distribution with those of its Radon transform.
\begin{lemma}\label{radon quasiasymptotics lemma} $f\in{\mathcal S}'_0({\mathbb R})$.
\begin{itemize}
\item [$(i)$] $f$ has the quasiasymptotic behavior (\ref{quasieq1}) if and only if its Radon transform has the quasiasymptotic behavior
\[{R}f\left(\mathbf{u},\lambda p \right){\rm \sim }{\lambda }^{\alpha+n-1}L{\rm (}\lambda {\rm )\ }{R}g\left(\mathbf{u}, p \right)\ \ \ \mbox{as }\lambda\to0^{+} \ (\mbox{resp. }\lambda\to\infty )\ \ \ \mbox{in }\mathcal S_{0}'(\mathbb R, \mathcal D'(\mathbb S^{n-1})).\]

\item [$(ii)$] $f$ satisfies (\ref{quasieq2}) if and only if its Radon transform satisfies
\[{R}f\left(\mathbf{u},\lambda p \right)=O({\lambda }^{\alpha+n-1}L{\rm (}\lambda {\rm )})\ \ \ \mbox{as }\lambda\to0^{+} \ (\mbox{resp. }\lambda\to\infty )\ \ \ \mbox{in }\mathcal S_{0}'(\mathbb R, \mathcal D'(\mathbb S^{n-1})).\]

\end{itemize}

\end{lemma}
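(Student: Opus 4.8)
The plan is to reduce the statement to the defining duality between the Radon transform $R$ and its dual $R^{\ast}$, together with the scaling behavior of $R^{\ast}$ on test functions. First I would record how dilations interact with the dual Radon transform: for $\phi\in\mathcal{S}_{0}(\mathbb{R}^{n})$ and $\lambda>0$, writing $\phi_{\lambda}(\mathbf{x})=\phi(\lambda\mathbf{x})$, one has the identity $R^{\ast}\bigl(\varrho(\mathbf{u},\lambda p)\bigr)(\mathbf{x})=R^{\ast}\varrho(\lambda\mathbf{x})$ for $\varrho\in\mathcal{S}_{0}(\mathbb{S}^{n-1}\times\mathbb{R})$, which is immediate from the definition $R^{\ast}\varrho(\mathbf{x})=\int_{\mathbb{S}^{n-1}}\varrho(\mathbf{u},\mathbf{x}\cdot\mathbf{u})\,d\mathbf{u}$ by replacing $\mathbf{x}$ with $\lambda\mathbf{x}$. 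Conversely, every test function $\vartheta\in\mathcal{S}_{0}(\mathbb{S}^{n-1}\times\mathbb{R})$ of the form $\vartheta=R\phi$ satisfies $R^{\ast}$ onto $\mathcal{S}_{0}(\mathbb{R}^{n})$, so the surjectivity of $R^{\ast}$ recorded in Subsection 2.3 is what lets us test against arbitrary $\phi$.

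Next I would unwind the dilated Radon transform. By definition of the distributional Radon transform, for $\varrho\in\mathcal{S}_{0}(\mathbb{S}^{n-1}\times\mathbb{R})$,
\[
\left\langle Rf(\mathbf{u},\lambda p),\varrho(\mathbf{u},p)\right\rangle
=\lambda^{-1}\left\langle Rf(\mathbf{u},p),\varrho(\mathbf{u},p/\lambda)\right\rangle
=\lambda^{-1}\left\langle f(\mathbf{x}),R^{\ast}\bigl(\varrho(\mathbf{u},p/\lambda)\bigr)(\mathbf{x})\right\rangle,
\]
the factor $\lambda^{-1}$ coming from the one-dimensional change of variables in the $p$-slices (the sphere variable is untouched). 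Using the scaling identity above, $R^{\ast}\bigl(\varrho(\mathbf{u},p/\lambda)\bigr)(\mathbf{x})=(R^{\ast}\varrho)(\mathbf{x}/\lambda)$, so the right-hand side becomes $\lambda^{-1}\langle f(\mathbf{x}),(R^{\ast}\varrho)(\mathbf{x}/\lambda)\rangle=\lambda^{n-1}\langle f(\lambda\mathbf{x}),(R^{\ast}\varrho)(\mathbf{x})\rangle$, the extra $\lambda^{n}$ appearing from the $n$-dimensional substitution. Dividing by $\lambda^{\alpha+n-1}L(\lambda)$ now gives
\[
\left\langle\frac{Rf(\mathbf{u},\lambda p)}{\lambda^{\alpha+n-1}L(\lambda)},\varrho(\mathbf{u},p)\right\rangle
=\left\langle\frac{f(\lambda\mathbf{x})}{\lambda^{\alpha}L(\lambda)},R^{\ast}\varrho(\mathbf{x})\right\rangle,
\]
and taking $\lambda\to0^{+}$ (resp.\ $\lambda\to\infty$) shows the left side converges if and only if the right side does, with limit $\langle g(\mathbf{x}),R^{\ast}\varrho(\mathbf{x})\rangle=\langle Rg(\mathbf{u},p),\varrho(\mathbf{u},p)\rangle$. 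Since $R^{\ast}$ maps $\mathcal{S}_{0}(\mathbb{S}^{n-1}\times\mathbb{R})$ \emph{onto} $\mathcal{S}_{0}(\mathbb{R}^{n})$, the family $\{R^{\ast}\varrho:\varrho\in\mathcal{S}_{0}(\mathbb{S}^{n-1}\times\mathbb{R})\}$ is all of $\mathcal{S}_{0}(\mathbb{R}^{n})$, which yields both implications in $(i)$; for $(ii)$ the same computation shows the evaluation on the left is $O(1)$ for every $\varrho$ precisely when the evaluation of $f(\lambda\mathbf{x})/(\lambda^{\alpha}L(\lambda))$ on every element of $\mathcal{S}_{0}(\mathbb{R}^{n})$ is $O(1)$.

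The only genuinely delicate point is the direction "$Rf$ has quasiasymptotics $\Rightarrow f$ has quasiasymptotics": a priori the displayed convergence for $Rf$ gives us control only after testing against functions in the range of $R^{\ast}$, but since that range is all of $\mathcal{S}_{0}(\mathbb{R}^{n})$ this is not actually an obstacle — one just has to invoke surjectivity of $R^{\ast}:\mathcal{S}_{0}(\mathbb{S}^{n-1}\times\mathbb{R})\to\mathcal{S}_{0}(\mathbb{R}^{n})$ from Subsection 2.3, and note that the limit distribution $\tilde{g}\in\mathcal{S}_{0}'(\mathbb{S}^{n-1}\times\mathbb{R})$ so obtained satisfies $\langle\tilde{g},R^{\ast}\varrho\rangle=\langle g,R^{\ast}\varrho\rangle$ for the $g\in\mathcal{S}_{0}'(\mathbb{R}^{n})$ defined by $\langle g,\phi\rangle:=\lim\langle f(\lambda\mathbf{x})/(\lambda^{\alpha}L(\lambda)),\phi\rangle$; by injectivity of $R$ one identifies $\tilde{g}=Rg$. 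The rest is the bookkeeping of dilation exponents carried out above, which is routine. I would also remark in passing that the $L(\lambda)$ factor plays no role in these manipulations since it is untouched by the dilations, so the slow variation hypothesis is only needed to make the notation (\ref{quasieq1})–(\ref{quasieq2}) meaningful.
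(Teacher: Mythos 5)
Your computation is essentially the paper's own: unwinding the duality $\langle Rf,\varrho\rangle=\langle f,R^{\ast}\varrho\rangle$ together with the scaling identity $R^{\ast}\bigl(\varrho(\mathbf{u},p/\lambda)\bigr)(\mathbf{x})=R^{\ast}\varrho(\mathbf{x}/\lambda)$ gives exactly the dilation formula $Rf_{\lambda}(\mathbf{u},p)=\lambda^{-(n-1)}Rf(\mathbf{u},\lambda p)$ for $f_{\lambda}(\mathbf{x})=f(\lambda\mathbf{x})$, and your exponent bookkeeping (the factors $\lambda^{-1}$ and $\lambda^{n}$) is correct. Where you genuinely differ is in the concluding step: the paper finishes by invoking Proposition \ref{radon iso}, i.e.\ that $R:\mathcal{S}_{0}'(\mathbb{R}^{n})\to R(\mathcal{S}_{0}'(\mathbb{R}^{n}))$ is a topological isomorphism (proved via Pt\'{a}k's theory), whereas you stay entirely at the level of the weak topology and use only the surjectivity of $R^{\ast}:\mathcal{S}_{0}(\mathbb{S}^{n-1}\times\mathbb{R})\to\mathcal{S}_{0}(\mathbb{R}^{n})$, observing that testing $f(\lambda\mathbf{x})/(\lambda^{\alpha}L(\lambda))$ against the range of $R^{\ast}$ is the same as testing against all of $\mathcal{S}_{0}(\mathbb{R}^{n})$. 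Since (\ref{quasieq1}) and (\ref{quasieq2}) are by definition weak-topology statements, this suffices, so your argument is a bit more elementary for this particular lemma; the paper's stronger Proposition \ref{radon iso} is in any case needed elsewhere (e.g.\ in Theorem \ref{boundedset}), which explains its use here. Two minor blemishes: the sentence claiming that every $\vartheta=R\phi$ ``satisfies $R^{\ast}$ onto $\mathcal{S}_{0}(\mathbb{R}^{n})$'' does not parse (all you need, and all you use, is the surjectivity of $R^{\ast}$ recorded in Subsection 2.3), and the final detour through an auxiliary limit $\tilde{g}$ and the injectivity of $R$ is not needed for the lemma as stated, since the hypothesis already presents the limit in the form $Rg$.
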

\begin{proof} Set $f_{\lambda}(\mathbf{x})=f(\lambda\mathbf{x})$. If $\varrho \in \mathcal S_{0}(\mathbb S^{n-1}\times \mathbb R)$, we have,

\begin{align*}\langle R{f_{\lambda}}(\mathbf{u}, p), \varrho(\textbf{u}, p)\rangle & = \frac{1}{\lambda^n}\langle f(\mathbf{x}), R^{\ast}\varrho(\textbf{x}/\lambda)\rangle
\\
&
= \frac{1}{\lambda^{n-1}}\left\langle  f(\mathbf{x}), \frac{1}{\lambda}\int_{\mathbb S^{n-1}}\varrho\left(\mathbf{u},\frac{\mathbf{x}\cdot \mathbf{u}}{\lambda}\right)d\mathbf{u}\right\rangle
\\
&=\frac{1}{\lambda^{n-1}}\langle R{f}(\mathbf{u}, \lambda p), \varrho(\textbf{u}, p)\rangle,
\end{align*}
namely, $Rf_{\lambda}(\mathbf{u},p)=\lambda^{-(n-1)}Rf(\mathbf{u},\lambda p)$.
The result is then a consequence of Proposition \ref{radon iso}.

\end{proof}



\begin{proposition}\label{abelRT} Suppose that $f\in \mathcal S'_{0}({\mathbb R})$ has the quasiasymptotic behavior (\ref{quasieq1}).
Then, given any $\varphi\in\mathcal{D}(\mathbb{S}^{n-1})$ and $(b,a)\in\mathbb{H}$, we have

\begin{equation}\label{abelian eq}\lim_{\lambda\to 0^{+}} \frac{\left\langle{{\mathcal R}}_{\psi}f\left(\mathbf{u}, \lambda b,\lambda a\right) ,\varphi(\mathbf{u}) \right\rangle_{\mathbf{u}}}{\lambda^{\alpha}L(\lambda)}= \left\langle {{\mathcal R}}_{\psi}g\left(\mathbf{u}, b,a \right),\varphi(\mathbf{u}) \right\rangle_{\mathbf{u}} \  \ \left({\rm resp. } \lim_{\lambda \rightarrow \infty}\right).
\end{equation}

\end{proposition}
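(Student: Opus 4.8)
The plan is to reduce the statement to the very definition of the quasiasymptotic behavior for scalar distributions, by exploiting the factorization $\mathcal{R}_{\psi}=J_{1-n}\circ\mathcal{W}_{\psi}\circ R$ of (\ref{rid-rad-wav}), Lemma \ref{radon quasiasymptotics lemma}, and the elementary observation that, for each fixed $(b,a)\in\mathbb{H}$, the dilated and translated wavelet $q\mapsto a^{-1}\overline{\psi}((q-b)/a)$ belongs to $\mathcal{S}_{0}(\mathbb{R})$ precisely because $\psi\in\mathcal{S}_{0}(\mathbb{R})$.

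First I would fix $\varphi\in\mathcal{D}(\mathbb{S}^{n-1})$, put $h_{\varphi}:=\langle Rf(\mathbf{u},\cdot\,),\varphi(\mathbf{u})\rangle_{\mathbf{u}}\in\mathcal{S}'_{0}(\mathbb{R})$ (using $\mathcal{S}'_{0}(\mathbb{S}^{n-1}\times\mathbb{R})=\mathcal{D}'(\mathbb{S}^{n-1},\mathcal{S}'_{0}(\mathbb{R}))$ from Subsection \ref{spaces}), and establish the pointwise identity
\[
\langle\mathcal{R}_{\psi}f(\mathbf{u},b,a),\varphi(\mathbf{u})\rangle_{\mathbf{u}}=a^{1-n}\,\mathcal{W}_{\psi}h_{\varphi}(b,a),\qquad (b,a)\in\mathbb{H},
\]
and likewise for $g$. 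This follows by testing the desingularization formula (\ref{eqdesingular}) and the relation (\ref{eq extra}) against $\varphi\otimes\Psi$ with $\Psi\in\mathcal{S}(\mathbb{H})$ arbitrary and comparing the resulting integrands, both continuous in $(b,a)$; the exchange $\langle\mathcal{W}_{\psi}(Rf)(\mathbf{u},b,a),\varphi(\mathbf{u})\rangle_{\mathbf{u}}=\mathcal{W}_{\psi}h_{\varphi}(b,a)$ is the Fubini property for vector-valued distributions afforded by the identifications in Subsection \ref{spaces}.

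Next I would scale. Substituting $(\lambda b,\lambda a)$ for $(b,a)$ and changing variables $p=\lambda q$ in the one-dimensional wavelet integral, the identity above gives
\[
\langle\mathcal{R}_{\psi}f(\mathbf{u},\lambda b,\lambda a),\varphi(\mathbf{u})\rangle_{\mathbf{u}}=(\lambda a)^{1-n}\Big\langle h_{\varphi}(\lambda q),\tfrac{1}{a}\overline{\psi}\Big(\tfrac{q-b}{a}\Big)\Big\rangle_{q}.
\]
By Lemma \ref{radon quasiasymptotics lemma}(i), the hypothesis (\ref{quasieq1}) yields $Rf(\mathbf{u},\lambda p)\sim\lambda^{\alpha+n-1}L(\lambda)Rg(\mathbf{u},p)$ in $\mathcal{S}'_{0}(\mathbb{R},\mathcal{D}'(\mathbb{S}^{n-1}))$; pairing with $\varphi\otimes\chi$ for $\chi\in\mathcal{S}_{0}(\mathbb{R})$ (note $\varphi\otimes\chi\in\mathcal{S}_{0}(\mathbb{S}^{n-1}\times\mathbb{R})$) produces the scalar quasiasymptotics $h_{\varphi}(\lambda q)\sim\lambda^{\alpha+n-1}L(\lambda)\,h^{g}_{\varphi}(q)$ in $\mathcal{S}'_{0}(\mathbb{R})$, where $h^{g}_{\varphi}:=\langle Rg(\mathbf{u},\cdot\,),\varphi(\mathbf{u})\rangle_{\mathbf{u}}$. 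Evaluating this quasiasymptotics at the fixed test function $a^{-1}\overline{\psi}((\cdot-b)/a)\in\mathcal{S}_{0}(\mathbb{R})$, dividing by $\lambda^{\alpha}L(\lambda)$, and using $(\lambda a)^{1-n}\lambda^{\alpha+n-1}=\lambda^{\alpha}a^{1-n}$, the limit equals $a^{1-n}\mathcal{W}_{\psi}h^{g}_{\varphi}(b,a)=\langle\mathcal{R}_{\psi}g(\mathbf{u},b,a),\varphi(\mathbf{u})\rangle_{\mathbf{u}}$, which is (\ref{abelian eq}). The case $\lambda\to\infty$ is identical.

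The only point requiring care is the first step: keeping careful track of the $J_{1-n}$ twist and of the convention of using the measure $a^{-1}dbda$ when passing between the vector-valued distribution $\mathcal{R}_{\psi}f$ and its pointwise values $\langle\mathcal{R}_{\psi}f(\mathbf{u},b,a),\varphi(\mathbf{u})\rangle_{\mathbf{u}}$, and justifying the exchange of the $\varphi$-pairing with the $p$-wavelet transform. After that bookkeeping, the conclusion is a direct unwinding of the definition of quasiasymptotic behavior, which is why the statement is genuinely Abelian and requires no hard estimates.
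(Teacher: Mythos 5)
Your proof is correct and follows the same skeleton as the paper's: Lemma \ref{radon quasiasymptotics lemma} to transfer the quasiasymptotics to $Rf$, and the factorization $\mathcal{R}_{\psi}=J_{1-n}\circ\mathcal{W}_{\psi}\circ R$ of (\ref{rid-rad-wav}) to pull it through to the ridgelet transform. The only difference is at the last step: where the paper simply cites the DFS-space-valued wavelet Abelian result (Prop.\ 3.1 of \cite{prof44}), you verify the needed instance by hand, first reducing to the scalar distribution $h_{\varphi}=\langle Rf(\mathbf{u},\cdot),\varphi(\mathbf{u})\rangle_{\mathbf{u}}$ via the identification (\ref{tensors2}), and then evaluating the scalar quasiasymptotics of $h_{\varphi}$ at the fixed test function $a^{-1}\overline{\psi}((\cdot-b)/a)\in\mathcal{S}_{0}(\mathbb{R})$. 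Your bookkeeping of the $J_{1-n}$ factor and of the $a^{-1}db\,da$ convention in (\ref{eq extra}) is accurate (the exponents combine as $(\lambda a)^{1-n}\lambda^{\alpha+n-1}=\lambda^{\alpha}a^{1-n}$, and your scaling identity is consistent with (\ref{rideps})), so the argument is a sound, self-contained substitute for the external citation; what it gives up is only the uniformity over compact sets of $(b,a)$ mentioned in the paper's subsequent remark, which the vector-valued formulation of \cite{prof44} delivers directly.
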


\begin{proof} This proposition follows by combining Lemma \ref{radon quasiasymptotics lemma} and the relation (\ref{rid-rad-wav}) with the DFS-space-valued version of \cite[Prop. 3.1]{prof44} for the wavelet transform (see comments in \cite[Sect. 8]{prof44}).
\end{proof}
\begin{remark}
The limit (\ref{abelian eq}) holds uniformly for $(b,a)$ in compact subsets of $\mathbb{H}$.
\end{remark}
\begin{remark}
If $f\in\mathcal{D}'_{L^{1}}(\mathbb{R}^{n})$, then (\ref{abelian eq}) reads
$$
\int_{\mathbb{S}^{n-1}} \mathcal{R_{\psi}}f(\mathbf{u},\lambda b,\lambda a) \varphi(\mathbf{u}) d\mathbf{u}\sim \lambda^{\alpha+n-1}L(\lambda)\int_{\mathbb{S}^{n-1}} \mathcal{R_{\psi}}g(\mathbf{u},b,a) \varphi(\mathbf{u}) d\mathbf{u},
$$
as follows from (\ref{eq extra2})
\end{remark}

\subsection{Tauberian theorem} Our next goal is to provide a Tauberian converse for Proposition \ref{abelRT}. The next theorem  characterizes the quasiasymtotic behavior in terms of the ridgelet transform.

\begin{theorem}\label{tauberRT} Let  $\psi\in \mathcal S_0({{\mathbb R}})\setminus\{0\}$ and $f\in {\mathcal S}'_{0}({{\mathbb R}}^n)$. The following two conditions:

\begin{equation} \label{GrindEQ__5_4_}
\lim_{\lambda \to 0^+} \frac{1}{{\lambda}^{\alpha}L(\lambda)}\langle{{\mathcal R}}_{\psi}f\left(\mathbf{u}, \lambda b,\lambda a \right), \varphi(\mathbf{u})\rangle={{\rm M}}_{b,a }(\varphi) \ \ \ \left(\mbox{resp. }\lim_{\lambda\to\infty}\right)
\end{equation}

\noindent exists (and is finite) for every $\varphi \in \mathcal{D}(\mathbb S^{n-1})$ and $\left( b,a \right)\in {\mathbb H}\cap \mathbb{S}$ , and
there exist $m,l>0$ such that for every $\varphi \in \mathcal{D}(\mathbb S^{n-1})$
\begin{equation} \label{ogr}
\left|\langle{\mathcal R}_{\psi}f\left(\mathbf{u}, \lambda b,\lambda a \right), \varphi(\mathbf{u})\rangle_{\textbf{u}}\right|\le C_{\varphi} \lambda^{\alpha}L(\lambda ){\left(a+\frac{1}{a}\right)^{l}}{(1+|b|)^m}
\end{equation}
for all $\left(b,a \right)\in \mathbb{H}\cap\mathbb{S}$ and $0<\lambda < 1$ (resp. $\lambda> 1$) are necessary and sufficient for the existence of a distribution $g$ such that $f$ has the quasiasymptotic behavior (\ref{quasieq1}).

\end{theorem}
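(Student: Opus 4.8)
The plan is to reduce the multidimensional Tauberian problem to a one-dimensional Tauberian theorem for the wavelet transform via the factorization $\mathcal{R}_{\psi}=J_{1-n}\circ\mathcal{W}_{\psi}\circ R$ from (\ref{rid-rad-wav}), together with the correspondence between quasiasymptotics of $f$ and of $Rf$ from Lemma \ref{radon quasiasymptotics lemma}. The necessity is already contained in Proposition \ref{abelRT} (which yields (\ref{GrindEQ__5_4_}) with ${\rm M}_{b,a}(\varphi)=\langle\mathcal{R}_{\psi}g(\mathbf{u},b,a),\varphi(\mathbf{u})\rangle_{\mathbf{u}}$) and in Lemma \ref{radon quasiasymptotics lemma} combined with the continuity of $\mathcal{W}_{\psi}$, which forces the growth bound (\ref{ogr}); so the substance is sufficiency.

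For sufficiency, set $h:=Rf\in\mathcal{S}'_{0}(\mathbb{S}^{n-1}\times\mathbb{R})=\mathcal{S}'_{0}(\mathbb{R},\mathcal{D}'(\mathbb{S}^{n-1}))$, a $\mathcal{D}'(\mathbb{S}^{n-1})$-valued tempered distribution on $\mathbb{R}$. By (\ref{rid-rad-wav}) and (\ref{eq extra}) the hypotheses (\ref{GrindEQ__5_4_})--(\ref{ogr}) translate, after absorbing the harmless factor $a^{1-n}$ and renaming the exponent $l$, into: the limit $\lim_{\lambda\to0^{+}}\lambda^{-(\alpha+n-1)}L(\lambda)^{-1}\langle\mathcal{W}_{\psi}h(\mathbf{u},\lambda b,\lambda a),\varphi(\mathbf{u})\rangle_{\mathbf{u}}$ exists for all $(b,a)\in\mathbb{H}\cap\mathbb{S}$ and all $\varphi\in\mathcal{D}(\mathbb{S}^{n-1})$, and the corresponding estimate holds for $0<\lambda<1$. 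This is precisely the hypothesis of the vector-valued wavelet Tauberian theorem; invoking the DFS-space-valued version of the Tauberian theorem for the wavelet transform from \cite{prof44} (the $\mathcal{D}'(\mathbb{S}^{n-1})$-valued analogue of \cite[Thm.]{vindas3,DZ2,prof44}), we conclude that $h$ has a quasiasymptotic behavior $h(\mathbf{u},\lambda p)\sim\lambda^{\alpha+n-1}L(\lambda)\,G(\mathbf{u},p)$ in $\mathcal{S}'_{0}(\mathbb{R},\mathcal{D}'(\mathbb{S}^{n-1}))$ for some $G\in\mathcal{S}'_{0}(\mathbb{R},\mathcal{D}'(\mathbb{S}^{n-1}))$, homogeneous of degree $\alpha+n-1$ in $p$.

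It remains to descend from $h=Rf$ back to $f$. Here Proposition \ref{radon iso} does the decisive work: since $R:\mathcal{S}'_{0}(\mathbb{R}^{n})\to R(\mathcal{S}'_{0}(\mathbb{R}^{n}))$ is a topological isomorphism, one first checks that $G$ lies in the range $R(\mathcal{S}'_{0}(\mathbb{R}^{n}))$. This follows because $R(\mathcal{S}'_{0}(\mathbb{R}^{n}))$ is (strongly, hence weakly) closed in $\mathcal{S}'_{0}(\mathbb{S}^{n-1}\times\mathbb{R})$ by Proposition \ref{radon iso}, and $G=\lim_{\lambda\to0^{+}}\lambda^{-(\alpha+n-1)}L(\lambda)^{-1}h(\mathbf{u},\lambda p)$ is a weak limit of elements of that subspace (each dilate of $h=Rf$ being, up to the scalar $\lambda^{n-1}$ from the computation in the proof of Lemma \ref{radon quasiasymptotics lemma}, the Radon transform of the corresponding dilate of $f$). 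Thus $G=Rg$ for a unique $g\in\mathcal{S}'_{0}(\mathbb{R}^{n})$, and $g$ is homogeneous of degree $\alpha$ because $R$ intertwines dilations as in Lemma \ref{radon quasiasymptotics lemma}. Finally, applying the converse direction of Lemma \ref{radon quasiasymptotics lemma}$(i)$ to $Rf\sim\lambda^{\alpha+n-1}L(\lambda)Rg$ yields exactly the quasiasymptotic behavior (\ref{quasieq1}) for $f$.

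The main obstacle is the faithful passage to the vector-valued one-dimensional setting in the second paragraph: one must be sure that the scalar hypotheses (\ref{GrindEQ__5_4_})--(\ref{ogr}), tested against arbitrary $\varphi\in\mathcal{D}(\mathbb{S}^{n-1})$, genuinely amount to the hypotheses of a Tauberian theorem for a $\mathcal{D}'(\mathbb{S}^{n-1})$-valued distribution, i.e. that pointwise-in-$\varphi$ control over dilates plus a uniform (in $\varphi$, up to the constant $C_{\varphi}$) growth bound of the form (\ref{ogr}) is enough to run the regular-inductive-limit argument of \cite[Prop. 3.2 and Sect. 8]{prof44} and obtain a genuine vector-valued quasiasymptotic limit $G$; this is where the DFS structure of $\mathcal{D}'(\mathbb{S}^{n-1})$ and the structural theorems for quasiasymptotics get used, and it is the step requiring the most care. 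The remaining bookkeeping — tracking the exponent shift $\alpha\mapsto\alpha+n-1$, the powers of $a$ coming from $J_{1-n}$ and from the two measures $a^{-1}dbda$ versus $a^{-n}dbda$, and the scalar $\lambda^{n-1}$ in Lemma \ref{radon quasiasymptotics lemma} — is routine.
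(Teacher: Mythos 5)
Your reduction has a genuine gap at its central step: the appeal to a ``DFS-space-valued version of the Tauberian theorem for the wavelet transform'' applied to $h=Rf$ with the given wavelet $\psi$. The theorem you need does not exist in the generality required here, and in fact cannot: Theorem \ref{tauberRT} allows an \emph{arbitrary} nonzero $\psi\in\mathcal{S}_{0}(\mathbb{R})$, whereas every wavelet Tauberian theorem in \cite{prof44,vindas3,DZ2} requires the wavelet to be non-degenerate, and this hypothesis is unavoidable. If, say, $\widehat{\psi}$ is supported in $(0,\infty)$ (such $\psi$ belongs to $\mathcal{S}_{0}(\mathbb{R})\setminus\{0\}$ and is admissible for the ridgelet theory, since one can pick $\eta$ with $K_{\psi,\eta}\neq 0$), then for any distribution $h_0(p)$ whose Fourier transform is supported in $(-\infty,0)$ one has $\mathcal{W}_{\psi}h_0\equiv 0$; choosing $h_0$ with no quasiasymptotic behavior and viewing it as a constant-in-$\mathbf{u}$ element of $\mathcal{S}'_{0}(\mathbb{R},\mathcal{D}'(\mathbb{S}^{n-1}))$ shows that the vector-valued one-dimensional statement you invoke is false for such $\psi$: the hypotheses on $\mathcal{W}_{\psi}h$ hold trivially while $h$ has no quasiasymptotics. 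The reason Theorem \ref{tauberRT} nevertheless holds for every nonzero $\psi$ is the evenness constraint $Rf(-\mathbf{u},-p)=Rf(\mathbf{u},p)$: the negative-frequency content of the slice $Rf_{\mathbf{u}}$ reappears as positive-frequency content of $Rf_{-\mathbf{u}}$, and the ridgelet inversion formula (\ref{eqidentity}) recombines antipodal directions (this is visible in the constant $K_{\psi,\eta}$, an integral over the whole frequency line). Your reduction treats $\mathbf{u}$ as a passive vector-valued index and thereby discards exactly this coupling, so the step ``hence $h$ has a quasiasymptotic limit $G$'' is unjustified; you yourself flag this passage as the delicate one, but it is not merely delicate --- as stated it fails.

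By contrast, the paper never leaves the ridgelet/$\mathbb{Y}^{n+1}$ setting in the sufficiency part: the bound (\ref{ogr}), via Theorem \ref{boundedset} and the dilation identity (\ref{rideps}), shows that the dilates $\lambda^{-\alpha}L(\lambda)^{-1}\mathcal{R}_{\psi}f(\mathbf{u},\lambda b,\lambda a)$ form a bounded, hence (Banach--Steinhaus) equicontinuous, subset of $\mathcal{S}'(\mathbb{Y}^{n+1})$; convergence is then checked only on the dense subspace $\mathcal{D}(\mathbb{S}^{n-1})\otimes\mathcal{S}(\mathbb{H})$ using (\ref{eq extra}), the slow growth of $M_{b,a}(\varphi)$, and dominated convergence, which produces $G\in\mathcal{S}'(\mathbb{Y}^{n+1})$; finally the inversion formula (\ref{eqidentity}) gives $g=(1/K_{\psi,\eta})\mathcal{R}^{t}_{\eta}G$ directly, with no one-dimensional Tauberian theorem and no descent problem. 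Your other ingredients are sound or easily repaired: the translation through (\ref{rid-rad-wav}) with the factor $a^{1-n}$, the descent argument using the weak closedness of $R(\mathcal{S}'_{0}(\mathbb{R}^{n}))$ from Proposition \ref{radon iso}, and the use of Lemma \ref{radon quasiasymptotics lemma} are all correct (the necessity part, as in the paper, is best obtained from Theorem \ref{boundedset} rather than ``continuity of $\mathcal{W}_{\psi}$''). But to salvage the sufficiency along your lines you would either have to add a non-degeneracy hypothesis on $\psi$, or genuinely exploit that $h$ lies in the range of $R$ so that the wavelet data at $\mathbf{u}$ and $-\mathbf{u}$ can be combined --- which in effect reproduces the paper's equicontinuity-plus-inversion argument.
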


\begin{proof} Assume first that $f$ has the quasiasymptotic behavior (\ref{quasieq1}). Proposition \ref{abelRT} implies that (\ref{GrindEQ__5_4_}) holds with $M_{b,a}(\varphi)=\left\langle {{\mathcal R}}_{\psi}g\left(\mathbf{u}, b,a \right),\varphi(\mathbf{u}) \right\rangle_{\mathbf{u}}$. Set $f_{\lambda}(\mathbf{x})=f(\lambda\mathbf{x})$.  Using \eqref{eqdesingular}, one readily verifies the relation
\begin{equation}\label{rideps}\mathcal{R}_{\psi}f_{\lambda}(\mathbf{u}, b,  a)= \mathcal R_{\psi}f( \mathbf{u}, \lambda b, \lambda a).
\end{equation}
On the other hand, $f$ satisfies (\ref{quasieq2}). That (\ref{ogr}) must necessarily hold  follows from Theorem \ref{boundedset}.

Conversely, assume \eqref{GrindEQ__5_4_} and \eqref{ogr}. Applying the same argument as in the proof of \cite[Lem. 6.1]{prof44}, one may assume that they hold \emph{for all} $(b,a)\in\mathbb{H}$ (in the case of \eqref{ogr}, one may need to replace $l$ and $m$ by bigger exponents). We will show that there is $G\in \mathcal{S}'(\mathbb{Y}^{n+1})$ such that
\begin{equation}
\label{eq4.8}\lim_{\lambda \to 0^+} \left\langle\frac{{{\mathcal R}}_{\psi}f\left(\mathbf{u}, \lambda b,\lambda a \right)} {{\lambda}^{\alpha}L(\lambda)}, \Phi(\mathbf{u},b,a)\right\rangle=\langle G\left(\mathbf{u},b,a\right), \Phi(\mathbf{u},b,a)\rangle \ \ \left(\mbox{resp. }\lim_{\lambda\to\infty}\right)
\end{equation}
for each $\Phi\in\mathcal{S}(\mathbb{Y}^{n+1})$. Once (\ref{eq4.8}) had been established, the inversion formula (\ref{eqidentity}) would imply that (\ref{quasieq1}) holds with $g=(1/K_{\psi,\eta}) \mathcal{R}^{t}_{\eta} G$. Using Theorem \ref{boundedset} and (\ref{rideps}) again, the estimates (\ref{ogr}) are equivalent to the quasiasymptotic boundedness (\ref{quasieq2}), but also to the boundedness in $\mathcal{S}'(\mathbb{Y}^{n+1})$ of the set
\begin{equation}\label{set rid}
\left\{\frac{{{\mathcal R}}_{\psi}f\left(\mathbf{u}, \lambda b,\lambda a \right)} {{\lambda}^{\alpha}L(\lambda)}: \: 0<\lambda<1\right\} \ \ \ \left(\mbox{resp. } \lambda>1\right).
\end{equation}
By the Banach-Steinhaus theorem, the set (\ref{set rid}) is equicontinuous. It is thus enough to show that the limit in the left-hand side of (\ref{eq4.8}) exists for $\Phi$ in the dense subspace $\mathcal{D}(\mathbb{S}^{n-1})\otimes\mathcal{S}(\mathbb{H})$ of $\mathcal{S}(\mathbb{Y}^{n+1})$. So, we check this for $\Phi(\mathbf{u},b,a)=\varphi(\mathbf{u})\Psi(b,a)$ with $\varphi\in\mathcal{D}(\mathbb{S}^{n-1})$ and $\Psi\in\mathcal{S}(\mathbb{H})$.
The function $M_{b,a}(\varphi)$ occurring in (\ref{GrindEQ__5_4_}) is measurable in $(b,a)\in\mathbb{H}$ and, in view of (\ref{ogr}), is of slow growth, i.e., it satisfies
\[\left| M_{b,a}(\varphi)\right|\le C_{\varphi}{\left(a+\frac{1}{a}\right)^{l}}{(1+|b|)^{m}}, \quad {\rm for \ \ all} \left(b,a \right)\in {\mathbb H}.\]
So, employing (\ref{eq extra}) and the Lebesgue dominated convergence theorem, we obtain
\begin{align*}&\lim_{\lambda\to0^{+}}\left\langle\frac{{{\mathcal R}}_{\psi}f\left(\mathbf{u}, \lambda b,\lambda a \right)} {{\lambda}^{\alpha}L(\lambda)}, \varphi(\mathbf{u})\Psi(b,a)\right\rangle
\\
&
=
\lim_{\lambda\to0^{+}}\int^{\infty }_{0} \int^{\infty }_{-\infty } \left\langle\frac{{{\mathcal R}}_{\psi}f\left(\mathbf{u}, \lambda b,\lambda a \right)} {{\lambda}^{\alpha}L(\lambda)}, \varphi(\mathbf{u})\right\rangle \Psi(b,a)
\frac{dbda}{a}
\\
&
=\int^{\infty}_{0} \int^{\infty}_{-\infty} M_{b,a}(\varphi)\Psi(b,a)
\frac{dbda}{a}
\end{align*}
(resp. $\lim_{\lambda\to \infty}$). This completes the proof.
\end{proof}
The following fact was already shown within the proof of Theorem \ref{tauberRT}.
\begin{corollary} Let  $\psi\in \mathcal S_0({{\mathbb R}})\setminus\{0\}$ and $f\in {\mathcal S}'_{0}({{\mathbb R}}^n)$. Then, $f$ satisfies (\ref{quasieq2}) if and only if there are $m,l>0$ such that for every $\varphi \in \mathcal{D}(\mathbb S^{n-1})$ the estimate (\ref{ogr}) holds
for all $0<\lambda < 1$ (resp. $\lambda> 1$) and $\left(b,a \right)\in \mathbb{H}\cap\mathbb{S}$ (or, equivalently, $\left(b,a \right)\in \mathbb{H}$).
\end{corollary}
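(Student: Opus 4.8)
The corollary asserts an equivalence between the quasiasymptotic boundedness (\ref{quasieq2}) and a family of estimates (\ref{ogr}) on the ridgelet transform. The plan is to extract both implications directly from material already developed in the proof of Theorem \ref{tauberRT} together with the bounded-set characterization of Theorem \ref{boundedset} and the scaling identity (\ref{rideps}).

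First I would treat the forward direction. Assuming (\ref{quasieq2}), i.e. $f(\lambda \mathbf{x}) = O(\lambda^{\alpha} L(\lambda))$ in $\mathcal{S}'_{0}(\mathbb{R}^{n})$ as $\lambda \to 0^{+}$ (resp. $\lambda \to \infty$), the set $\{ \lambda^{-\alpha} L(\lambda)^{-1} f_{\lambda} : 0 < \lambda < 1 \}$ (resp. $\lambda > 1$), where $f_{\lambda}(\mathbf{x}) = f(\lambda \mathbf{x})$, is weakly bounded in $\mathcal{S}'_{0}(\mathbb{R}^{n})$, hence bounded by the Banach--Steinhaus theorem. Applying Theorem \ref{boundedset} to this bounded family $\mathfrak{B}$, we obtain exponents $l, m > 0$ so that estimate (\ref{bounded}) holds uniformly over $\mathfrak{B}$; rewriting it via the scaling relation (\ref{rideps}), namely $\mathcal{R}_{\psi} f_{\lambda}(\mathbf{u}, b, a) = \mathcal{R}_{\psi} f(\mathbf{u}, \lambda b, \lambda a)$, yields precisely (\ref{ogr}) for all $(b,a) \in \mathbb{H}$, and a fortiori for $(b,a) \in \mathbb{H} \cap \mathbb{S}$. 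For the converse, assuming (\ref{ogr}) on $\mathbb{H} \cap \mathbb{S}$, the same homogeneity-of-the-scaling argument used at the beginning of the converse part of Theorem \ref{tauberRT} (the one borrowed from \cite[Lem. 6.1]{prof44}) upgrades the estimate to hold on all of $\mathbb{H}$, possibly after enlarging $l$ and $m$; reading (\ref{ogr}) through (\ref{rideps}) then says exactly that $\mathcal{R}_{\psi}(\mathfrak{B})$ is a bounded subset of $\mathcal{S}'(\mathbb{Y}^{n+1})$ for the family $\mathfrak{B} = \{ \lambda^{-\alpha} L(\lambda)^{-1} f_{\lambda} \}$, whence $\mathfrak{B}$ is bounded in $\mathcal{S}'_{0}(\mathbb{R}^{n})$ by Theorem \ref{boundedset}, which is one of the equivalent formulations of (\ref{quasieq2}).

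Since none of these steps requires any new input, I would simply remark that all the needed facts—the equivalence of (\ref{ogr}) with the boundedness of the set in (\ref{set rid}), the passage between $\mathbb{H} \cap \mathbb{S}$ and $\mathbb{H}$, and the identification of that boundedness with (\ref{quasieq2})—were established verbatim inside the proof of Theorem \ref{tauberRT}. The only genuine subtlety, and thus the point I would be most careful about, is the localization-in-scale argument that promotes estimates valid only on the compact slice $\mathbb{H} \cap \mathbb{S}$ to estimates on the whole half-plane $\mathbb{H}$: this relies on covering $\mathbb{H}$ by dyadic rescalings of a fixed compact neighborhood and tracking how the weight $(a + 1/a)^{l}(1+|b|)^{m}$ transforms, which is exactly the mechanism of \cite[Lem. 6.1]{prof44}. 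Everything else is bookkeeping, so the corollary is really just an isolation of the statement already proved as part of Theorem \ref{tauberRT}.
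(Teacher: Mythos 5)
Your argument is correct and coincides with the paper's: the paper proves this corollary exactly as you do, inside the proof of Theorem \ref{tauberRT}, by combining the scaling identity (\ref{rideps}) with the equivalence $(i)\Leftrightarrow(ii)$ of Theorem \ref{boundedset} applied to the family $\{\lambda^{-\alpha}L(\lambda)^{-1}f(\lambda\:\cdot)\}$, and by invoking the argument of \cite[Lem. 6.1]{prof44} to pass from $\mathbb{H}\cap\mathbb{S}$ to all of $\mathbb{H}$ after enlarging $l$ and $m$. No gaps; your identification of the slice-to-half-plane step as the only delicate point matches the paper's treatment.
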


\end{document}